\newcommand{\proj}{\mathrm{proj}}
\newcommand{\prox}{\mathrm{prox}}
\begin{document}
\title{M-estimation with the Trimmed $\ell_1$ Penalty}
\author{
\name
	Jihun Yun \email arcprime@kaist.ac.kr\\
	\addr School of Computing \\
        Korea Advanced Institute of Science and Technology\\
        Daejeon 34141, Republic of Korea
	\AND
	\name Peng Zheng \email zhengp@uw.edu \\
       \addr Department of Applied Mathematics\\
       University of Washington\\
       Seattle, WA 98195-3925, USA
       \AND
       \name Eunho Yang \email eunhoy@kaist.ac.kr \\
       \addr School of Computing \\
       Korea Advanced Institute of Science and Technology\\
       Daejeon 34141, Republic of Korea
       \AND
       \name Aurelie Lozano \email aclozano@us.ibm.com \\
       \addr IBM T.J. Watson Research Center\\
       Yorktown Heights, NY USA
       \AND
       \name Aleksandr Aravkin \email saravkin@uw.edu \\
       \addr Department of Applied Mathematics\\
       University of Washington\\
       Seattle, WA 98195-3925, USA}
\editor{}
\maketitle
\begin{abstract}
We study high-dimensional estimators with the trimmed $\ell_1$ penalty,
which leaves the $h$ largest parameter entries penalty-free. 
While optimization techniques for this nonconvex penalty have been studied, the statistical properties have not yet been analyzed.
We present the first statistical analyses for $M$-estimation,
and characterize support recovery, $\ell_\infty$ and $\ell_2$ error of the trimmed $\ell_1$ estimates as a function of the trimming parameter $h$.
Our results show different regimes based on how $h$ compares to the true support size.
Our second contribution is a new algorithm for the trimmed regularization problem, 
which has the same theoretical convergence rate as difference of convex (DC) algorithms, 
but in practice is faster and finds lower objective values. Empirical evaluation of $\ell_1$ trimming for sparse linear regression and graphical model estimation indicate that trimmed $\ell_1$ can outperform vanilla $\ell_1$ and non-convex alternatives.
Our last contribution is to show that the trimmed penalty is beneficial beyond $M$-estimation, and yields promising results for two deep learning tasks: input structures recovery and network sparsification.
\end{abstract}

\section{Introduction}

We consider high-dimensional estimation problems, where the number of variables $p$ can be much larger that the number of observations $n$. 
In this regime, consistent estimation can be achieved by imposing low-dimensional structural constraints on the estimation parameters. 
{\it Sparsity} is a prototypical structural constraint, where at most a small set of parameters can be non-zero.
A key class of sparsity-constrained estimators is based on regularized $M$-estimators using \emph{convex} penalties, with the $\ell_1$ penalty by far the most common. In the context of linear regression, the Lasso estimator~\citep{Tibshirani96} solves an $\ell_1$ regularized or constrained least squares problem, and has strong statistical guarantees, including prediction error consistency~\citep{GeerBuhl09}, consistency of the parameter estimates in some norm~\citep{GeerBuhl09,MeinshausenYu09,Dantzig2007}, and variable selection consistency~\citep{Meinshausen06,Wainwright06,Zhao06}.  
In the context of sparse Gaussian graphical model (GMRF) estimation, the graphical Lasso estimator minimizes the Gaussian negative log-likelihood regularized by the $\ell_1$ norm of the off-diagonal entries of the concentration~\citep{YuaLin07,FriedHasTib2007,BanGhaAsp08}. Strong statistical guarantees for this estimator have been established (see \citet{RWRY11} and references therein).

Recently, there has been significant interest in \emph{non-convex} penalties to alleviate the bias incurred by convex approaches, including SCAD and MCP penalties \citep{FanLi91,breheny2011coordinate,zhang2010nearly,zhang2012general}.  In particular,~\citet{zhang2012general} established consistency for the global optima of least-squares problems with certain non-convex penalties. \citet{LW15} showed that under some regularity conditions on the penalty, any stationary point of the objective function will lie within statistical precision of the underlying parameter vector and thus provide $\ell_2$- and $\ell_1$- error bounds for any stationary point. \citet{LW17} proved that  for a class of \emph{amenable} non-convex regularizers with vanishing derivative away from the origin (including SCAD and MCP), any stationary point is able to recover the parameter support without requiring the typical incoherence conditions needed for convex penalties. All of these analyses apply to 
non-convex penalties that are \emph{coordinate-wise separable}. 

Our starting point is a family of $M$-estimators with trimmed $\ell_1$ regularization, which leaves the largest $h$ parameters unpenalized.
This non-convex family includes the  Trimmed Lasso~\cite{gotoh2017dc,bertsimas2017trimmed} as a special case.  
Unlike SCAD and MCP, trimmed regularization exactly solves constrained best subset selection for large enough values of the regularization parameter, 
and offers more direct control of sparsity via the parameter $h.$ While Trimmed Lasso has been studied from an optimization perspective and with respect to its connections to existing penalties, it has \emph{not} been analyzed from a statistical standpoint.

\vskip 4pt \noindent
{\bf Contributions:}
\begin{itemize}

\item We present the \emph{first} statistical analysis of $M$-estimators with trimmed regularization, \emph{including} Trimmed Lasso. Existing results for non-convex regularizers \citep{LW15,LW17} cannot be applied as trimmed regularization is neither coordinate-wise decomposable nor ``ameanable''.  We provide support recovery guarantees, $\ell_\infty$ and $\ell_2$ estimation error bounds for general $M$-estimators, and derive specialized corollaries for linear regression and graphical model estimation. Our results show different regimes based on how the trimming parameter $h$ compares to the true support size.

\item To optimize the trimmed regularized problem we develop and analyze a new algorithm, which performs better than difference of convex (DC)
functions optimization~\citep{khamaru2018convergence}.  

\item Experiments on sparse linear regression and graphical model estimation show $\ell_1$ trimming is 
 competitive with  other non-convex penalties and vanilla $\ell_1$  when $h$ is selected by cross-validation, and 
  has consistent benefits for a wide range of values for $h$. 

\item Moving beyond $M$-estimation, we apply trimmed regularization to two deep learning tasks: (i) recovering input structures of deep models and (ii) network pruning (a.k.a. sparsification, compression). Our experiments on input structure recovery are motivated by~\citet{Oymak18}, who quantify 
complexity of sparsity encouraging regularizers by introducing the covering dimension, and demonstrates the benefits of regularization for learning over-parameterized networks. 
We show trimmed regularization achieves superior sparsity pattern recovery compared to competing approaches. 
For network pruning, we illustrate the benefits of trimmed $\ell_1$ over vanilla $\ell_1$ on MNIST classification using the LeNet-300-100 architecture. 
 Next, motivated by recently developed pruning methods based on variational Bayesian approaches \citep{dai2018vib,Louizos18}, we propose Bayesian neural networks with trimmed $\ell_1$ regularization. In our experiments, these achieve superior results compared to competing approaches with respect to both error and sparsity level.
  Our work therefore indicates broad relevance of trimmed regularization in multiple problem classes.
\end{itemize}

\section{Trimmed Regularization}\label{Sec:Setup}

Trimming has been typically applied to the \emph{loss} function $\L$ of $M$-estimators. 
We can handle outliers
by trimming {\it observations} with large residuals in terms of $\L$: given a collection of $n$ samples, $\Data = \{Z_1, \hdots, Z_n\}$, we solve  
\[\minimize_{\th \in \Omega, \w \in \{0,1\}^n} \sum_{i=1}^n w_i \L(\th; Z_i) \quad \mbox{s.t.} 
\sum_{i=1}^n w_i = n-h,
\] 
where $\Omega$ denotes the parameter space (e.g., $\mathbb{R}^p$ for linear regression). This amounts to trimming $h$ outliers as we learn $\th$ (see~\citet{YLA2018} and references therein).

In contrast, we consider here a family of $M$-estimators with trimmed \emph{regularization} for general high-dimensional problems. We trim entries of $\th$ that incur the largest penalty using the following program:
\begin{align}\label{EqnTrimmedReg1}
	\minimize_{\th \in \Omega, \, \w \in [0,1]^p} \ \ &  \L(\th;\Data) + \lam \sum_{j=1}^p w_j |\theta_j| \nonumber\\
	\st \ \ & {\bf 1}^\top \w \geq p-h \, . 
\end{align}
Defining the order statistics of the parameter $|\theta_{(1)}| > |\theta_{(2)}| > \hdots > |\theta_{(p)}|$, we can partially minimize over $\w$ (setting $w_i$ to $0$ or $1$ based on the size of $|\theta_{i}|$), 
and rewrite the reduced version of problem \eqref{EqnTrimmedReg1} in $\th$ alone:
\begin{align}\label{EqnTrimmedReg2}
	\minimize_{\th \in \Omega} \ \ &  \L(\th;\Data) + \lam \R(\th;h) 
\end{align}
where the regularizer $\R(\th; h)$ is the smallest $p-h$ absolute sum of $\th: \sum_{j=h+1}^p |\theta_{(j)}|$. The constrained version of \eqref{EqnTrimmedReg2}
is equivalent to minimizing a loss subject to a sparsity penalty~\citep{gotoh2017dc}:
$
	\minimize_{\th \in \Omega}  \L(\th;\Data) \  \st \   \|\th\|_0 \leq h.
$
For statistical analysis, we focus on the reduced problem~\eqref{EqnTrimmedReg2}. When optimizing, we exploit the 
structure of~\eqref{EqnTrimmedReg1}, treating weights $\w$ as auxiliary optimization variables, and derive a new fast algorithm 
with a custom analysis that does not use DC structure.

We focus on two key examples: sparse linear models and sparse graphical models. We also present empirical results for trimmed regularization of deep learning tasks to show that the 
ideas and methods generalize well to these areas. 

\paragraph{Example 1: Sparse linear models.} In high-dimensional linear regression, we observe $n$ pairs of a real-valued target  $y_i \in \reals$ and its covariates ${\bm x}_i \in \reals^p$ in a linear relationship:
	\begin{align}\label{EqnLinearModel}
		\y = \X \Tth + \e.	
	\end{align}
 	Here, $\y \in \reals^n$, $\X \in \reals^{n\times p}$ and $\e\in \reals^n$ is a vector of  $n$ independent observation errors. The goal is to estimate the $k$-sparse vector $\Tth \in \reals^p$. According to~\eqref{EqnTrimmedReg2}, we use the least squares loss function with trimmed $\ell_1$ regularizer (instead of the standard $\ell_1$ norm in Lasso \cite{Tibshirani96}):   
	\begin{align}\label{EqnLS}
		\minimize_{\th \in \reals^{p}} \frac{1}{n} \big\| \X \th - \y \big\|_2^2 + \lam \R(\th;h)  	.
	\end{align}

\paragraph{Example 2: Sparse graphical models.} 
GGMs form a powerful class of statistical models for representing distributions over a set of variables~\citep{Lauritzen96}, using undirected graphs to encode conditional independence conditions among variables.
In the high-dimensional setting, graph sparsity constraints are particularly pertinent for estimating GGMs. The most widely used estimator, the graphical Lasso minimizes the negative Gaussian log-likelihood regularized by the $\ell_1$ norm of the entries (or the off-diagonal entries) of the precision matrix (see~\citet{YuaLin07,FriedHasTib2007,BanGhaAsp08}). In our framework, we replace $\ell_1$ norm with its trimmed version:
		$\minimize_{\Th \in \mathcal{S}^{p}_{++}} \ \textrm{trace}\big(\Sig \Th \big) -\log\det \big(\Th\big) + \lam \R(\Th_{\textrm{off}};h)$  	
	where $\mathcal{S}^{p}_{++}$ denotes the convex cone of symmetric and strictly positive definite matrices, $\R(\Th_{\textrm{off}};h)$ does the smallest $p(p-1)-h$ absolute sum of off-diagonals. 

\paragraph{Relationship with SLOPE (OWL) penalty.}
Trimmed regularization has an apparent resemblance to the SLOPE (or OWL) penalty~\citep{bogdan2015slope,figueiredo2014sparse}, but the two are in fact distinct and pursue different goals. Indeed, the SLOPE penalty can be written as $\sum_{i=1}^p w_i |\beta_{(i)}|$ for a \emph{fixed} set of weights $w_1 \geq w_2 \geq \cdots \geq w_p \geq 0$ and where $|\beta_{(1)}|>|\beta_{(2)}|>\cdots> |\beta_{(p)}|$ are the sorted entries of $\bm\beta.$  SLOPE is convex and penalizes more those parameter entries with {\it largest amplitude}, while trimmed regularization is generally non-convex, and only penalizes entries with {\it smallest amplitude}; the weights are also optimization variables. While the goal of trimmed regularization is to alleviate bias, SLOPE is akin to a significance test where top ranked entries are subjected to a ``tougher'' threshold, and has been employed for clustering strongly correlated variables~\citep{figueiredo2014sparse}. Finally from a robust optimization standpoint, Trimmed regularization can be viewed as using an optimistic (min-min) model of uncertainty and SLOPE a pessimistic (min-max) counterpart. We refer the interested reader to~\citet{bertsimas2017trimmed} for an in-depth exploration of these connections. 

\paragraph{Relationship with $\ell_0$ regularization.} 
 The $\ell_0$ norm can be written as $\|\bm\theta\|_0 = \sum_{j=1}^{p} z_j$ with reparameterization $\theta_j = z_j \tilde\theta_j$ such that $z_j \in \{0, 1\}$ and $\tilde\theta_j \neq 0$. ~\citet{Louizos18} suggest a smoothed version via continuous relaxation on $\bm{z}$ in a variational inference framework. The variable $\bm{z}$ plays a similar role to $\bm{w}$ in our formulation in that they both learn sparsity patterns. In Section~\ref{Sec:Exp} we consider a Bayesian extension of the trimmed regularization problem where $\bm\theta$ only is be treated as Bayesian, since we can optimize $\bm{w}$ without any approximation, in contrast to previous work which needs to relax the discrete nature of $\bm{z}$.

\section{Statistical Guarantees of $M$-Estimators with Trimmed Regularization}

Our goal is to estimate the \emph{true} $k$-sparse parameter vector (or matrix) $\Tth$ that is the minimizer of expected loss: $\Tth := \argmin_{\th \in \Omega} \E[\L(\th)]$. We use $\Supp$ to denote the support set of $\Tth$, namely the set of non-zero entries (i.e., $k = |\Supp|$). In this section, we derive support recovery, $\ell_\infty$ and $\ell_2$ guarantees under the following standard assumptions: 

\begin{enumerate}[leftmargin=0.5cm, itemindent=0.65cm,label=\textbf{(C-$\bf{\arabic*}$)}, ref=\textnormal{(C-$\arabic*$)},start=1]
	\item The loss function $\L$ is differentiable and convex.\label{Con:diff}
	\vspace{-.2cm}\item {\bf (Restricted strong convexity on $\th$)} Let $\errtSet$ be the possible set of error vector on the parameter $\th$. Then, for all $\errt := \th - \Tth \in \errtSet$,
		$\Biginner{\nabla \L(\Tth + \errt) - \nabla\L(\Tth) }{\errt} \geq \, \RSCcon \|\errt\|_2^2 - \RSCtolOne \frac{\log p}{n}\|\errt\|_1^2$, 
		where $\RSCcon$ is a ``curvature'' parameter, and $\RSCtolOne$
		is a ``tolerance'' constant.\label{Con:rsc}
\end{enumerate}

In the high-dimensional setting ($p>n$), the loss function $\L$ cannot be strongly convex in general. \ref{Con:rsc} imposes strong curvature only in some limited directions where the ratio $\frac{\|\errt\|_1}{\|\errt\|_2}$ is small. This condition has been extensively studied and known to hold for several popular high dimensional problems (see \citet{Raskutti2010,NRWY12,LW15} for instance). The convexity condition of $\L$ in \ref{Con:diff} can be relaxed as shown in \cite{LW17}. For clarity, however, we focus on convex loss functions. 

We begin with $\ell_\infty$ guarantees. We use a primal-dual witness (PDW) proof technique, which we adapt to the trimmed regularizer $\R(\th;h)$. The PDW method has been used to analyze the support set recovery of $\ell_1$ regularization \citep{Wainwright2006new,YRAL13} as well as decomposable and amenable non-convex regularizers \citep{LW17}. However, the trimmed regularizer $\R(\th;h)$ is neither decomposable nor amenable, thus the results of~\citet{LW17} cannot be applied. The key step of PDW is to build a restricted program: Let $\Nonreg$ be an arbitrary subset of $\{1,\hdots,p\}$ of size $h$. Denoting $\USupNonreg := \Supp \cup \Nonreg$ and $\DSupNonreg := \Supp - \Nonreg$, we consider the following restricted program:  
	$\Gth \in \argmin_{\th \in \reals^{\USupNonreg} : \ \th \in \Omega}  \   \L(\th) + \lam \R(\th;h)$ 
where we fix $\Gth_j = 0$ for all $j \in \USupNonregC$. We further construct the dual variable $\Gz$ to satisfy the zero sub-gradient condition
\begin{align}\label{EqnPDW}
	\nabla \L(\Gth) + \lam \Gz = 0	
\end{align}
where $\Gz = (0, \Gz_{\DSupNonreg}, \Gz_{\USupNonregC})$ for $\Gth = (\Gth_{\Nonreg}, \Gth_{\DSupNonreg},0_{\USupNonregC})$ (after re-ordering indices properly) and $\Gz_{\DSupNonreg} \in \partial \|\Gth_{\DSupNonreg}\|_1$. We suppress the dependency on $\Nonreg$ in $\Gz$ and $\Gth$ for clarity. In order to derive the final statement, we will establish the strict dual feasibility of $\Gz_{\USupNonregC}$, i.e., $\|\Gz_{\USupNonregC}\|_\infty < 1$. 

The following theorem describes our main theoretical result concerning \emph{any} local optimum of the non-convex program \eqref{EqnTrimmedReg2}. The theorem guarantees under strict dual feasibility that non-relevant parameters of local optimum have smaller absolute values than relevant parameters; hence relevant parameters are not penalized (as long as $h \geq k$).    
\begin{theorem}\label{ThmSupp}
	Consider the problem with trimmed regularizer \eqref{EqnTrimmedReg2} that satisfies \ref{Con:diff} and \ref{Con:rsc}. Let $\Lth$ be an any local minimum of \eqref{EqnTrimmedReg2} with a sample size $n \geq \frac{2\RSCtolOne}{\RSCcon} (k+h)\log p$ and $\lam \geq 2 \|\nabla \L(\Tth) \|_\infty$. Suppose that:
	\begin{enumerate}[leftmargin=0.25cm, itemindent=0.45cm,label=(\alph*)]
		\item given {\bf any} selection of $\Nonreg \subseteq \{1,\hdots,p\}$ s.t. $|\Nonreg|=h$, the dual vector $\Gz$ from the PDW construction \eqref{EqnPDW} satisfies the strict dual feasibility with some $\delta \in \left(0, 1\right]$, 
				$\|\Gz_{\USupNonregC}\|_\infty \leq 1 - \delta$
			where $\USupNonreg$ is the union of true support $S$ and $\Nonreg$,
		\vspace{-.2cm}\item letting $\Q : = \int_0^1 \nabla^2 \L\big(\Tth + t(\Gth - \Tth) \big) dt$, the minimum absolute value $\Tth_{\min} := \min_{j\in \Supp} |\Tth_j|$ is lower bounded by

			$\frac{1}{2}\Tth_{\min} \geq  \| (\Qs)^{-1} \nabla\L(\Tth)_\USupNonreg \|_{\infty}   + \lam \matnorm{(\Qs)^{-1}}{\infty}$ where $\matnorm{\cdot}{\infty}$ denotes the maximum absolute row sum of the matrix. 
	\end{enumerate}
  	Then, the following properties hold:
  	 \begin{enumerate}[leftmargin=0.25cm, itemindent=0.45cm,label=(\arabic*)]
 		\item For every pair $j_1 \in \Supp, j_2 \in \SuppC$, we have $|\Lth_{j_1}| > |\Lth_{j_2}|$,
 		\vspace{-.4cm}\item If $h < k$, all $j \in \SuppC$ are successfully estimated as zero and $\|\Lth -\Tth\|_\infty$ is upper bounded by 
 			\begin{align}\label{EqnThmInfty}
 				\big\| \big(\TQ\big)^{-1} \nabla\L(\Tth)_{\Supp} \big\|_{\infty} + \lam \matnormbig{\big(\TQ\big)^{-1}}{\infty} ,
 			\end{align}
 		 \vspace{-.4cm}\item If $h \geq k$, at least the smallest (in absolute value) $p-h$ entries in $\SuppC$ are estimated exactly as zero and we have a simpler (possibly tighter) bound:
 			\begin{align}\label{EqnThmInftyTight}
				\|\Lth -\Tth\|_\infty \leq \big\| \big(\GQs\big)^{-1} \nabla\L(\Tth)_{\GUSupNonreg} \big\|_{\infty}
 			\end{align}
 			where $\GUSupNonreg$ is defined as the $h$ largest absolute entries of $\Lth$ including $\Supp$.
 	\end{enumerate}

\end{theorem}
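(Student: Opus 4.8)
The plan is to reduce the nonconvex program to a family of convex surrogates indexed by the choice of the unpenalized set $\Nonreg$, run the PDW argument on the correct surrogate, and then show that every local minimum must coincide with the associated oracle solution. First I fix an arbitrary local minimum $\Lth$ and take $\Nonreg$ to be the index set of the $h$ largest entries of $|\Lth|$. Since the trimmed regularizer obeys $\R(\th;h) = \min_{|A|=h}\sum_{j\notin A}|\theta_j| \le \sum_{j\notin\Nonreg}|\theta_j|$, with equality exactly when $\Nonreg$ is the top-$h$ set of $\th$, in a neighborhood of $\Lth$ where the top-$h$ ordering is preserved we have $\R(\th;h) = \sum_{j\notin\Nonreg}|\theta_j|$. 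Hence $\Lth$ is a local, and therefore global, minimizer of the convex surrogate $f_{\Nonreg}(\th) := \L(\th) + \lam\sum_{j\notin\Nonreg}|\theta_j|$. This is the step that converts ``local minimum of a nonconvex program'' into ``global minimizer of a convex program''; a strict gap at the $h$-th largest magnitude makes the identification exact, while ties at the trimming boundary are handled by a separate perturbation argument.

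Next I construct the oracle. For this $\Nonreg$ I solve the restricted program to obtain $\Gth$, supported on $\USupNonreg$ with $|\USupNonreg|\le k+h$, and build $\Gz$ from \eqref{EqnPDW}. The sample-size condition $n \ge \frac{2\RSCtolOne}{\RSCcon}(k+h)\log p$ together with \ref{Con:rsc} upgrades restricted strong convexity to genuine strong convexity on the coordinate subspace indexed by $\USupNonreg$: for $\errt\in\errtSet$ supported there, $\|\errt\|_1^2 \le (k+h)\|\errt\|_2^2$, so the tolerance term is absorbed and $\inner{\nabla\L(\Tth+\errt)-\nabla\L(\Tth)}{\errt} \ge \tfrac{\RSCcon}{2}\|\errt\|_2^2$. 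This makes $\Gth$ unique and $\Qs$ invertible. I then read off the $\ell_\infty$ bound from the mean-value identity $[\nabla\L(\Gth)-\nabla\L(\Tth)]_\USupNonreg = \Qs(\Gth-\Tth)_\USupNonreg$ combined with \eqref{EqnPDW}, giving $(\Gth-\Tth)_\USupNonreg = -(\Qs)^{-1}\big(\nabla\L(\Tth)_\USupNonreg + \lam\Gz_\USupNonreg\big)$; since $\|\Gz\|_\infty\le 1$ this yields $\|\Gth-\Tth\|_\infty \le \|(\Qs)^{-1}\nabla\L(\Tth)_\USupNonreg\|_\infty + \lam\matnorm{(\Qs)^{-1}}{\infty}$. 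Assumption (b) then forces $\|\Gth-\Tth\|_\infty \le \tfrac12\Tth_{\min}$, which gives support separation for $\Gth$: for $j_1\in\Supp$ we get $|\Gth_{j_1}|\ge \Tth_{\min}-\tfrac12\Tth_{\min}$, while for $j_2\in\SuppC$ we get $|\Gth_{j_2}|\le\tfrac12\Tth_{\min}$, with strictness inherited from the strict inequality in (b).

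The third step transfers these properties to $\Lth$. Strict dual feasibility $\|\Gz_{\USupNonregC}\|_\infty\le 1-\delta<1$ certifies by the standard PDW argument that $\Gth$, extended by zeros, is a global minimizer of $f_{\Nonreg}$, and the strict inequality additionally forces every minimizer of $f_{\Nonreg}$ to vanish on $\USupNonregC$. Since $\Lth$ is also a global minimizer of $f_{\Nonreg}$, it is supported on $\USupNonreg$, and subspace strong convexity forces $\Lth=\Gth$. All properties then hold for $\Lth$: conclusion (1) is the support separation; when $h<k$ the separation places $\Nonreg\subseteq\Supp$, so $\USupNonreg=\Supp$, $\Qs=\TQ$, and the bound collapses to \eqref{EqnThmInfty}; when $h\ge k$ the separation places $\Supp\subseteq\Nonreg$, so the support is unpenalized, $\Gz_\USupNonreg=0$, the $\lam$-term vanishes, and we recover the sharper \eqref{EqnThmInftyTight} on $\GUSupNonreg$, with the $p-h$ smallest entries of $\SuppC$ exactly zero.

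I expect the main obstacle to be this third step, namely rigorously matching an arbitrary local minimum to the oracle. Two points are delicate: the reduction in the first step requires the top-$h$ magnitudes of $\Lth$ to be strictly separated from the rest, so boundary ties need a dedicated limiting argument; and the ``strictly feasible dual forces zero'' claim must be established for a general convex $\L$ rather than only for $\L=g\circ\X$, where it would follow from uniqueness of the fitted values. For general $\L$ I would exploit that $f_{\Nonreg}$ is affine along the segment joining any two minimizers to deduce $\inner{\nabla\L(\Lth)-\nabla\L(\Gth)}{\Lth-\Gth}=0$, combine this with affineness of the $\ell_1$ term and the strict bound $|\Gz_j|<1$ on $\USupNonregC$ to exclude nonzero coordinates there, and close the loop with the subspace strong convexity supplied by \ref{Con:rsc}.
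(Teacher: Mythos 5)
Your proposal follows essentially the same route as the paper: reduce to the convex surrogate $\L(\th)+\lam\sum_{j\notin\Nonreg}|\theta_j|$ indexed by the top-$h$ set of the local minimum, run the PDW construction on it, use strict dual feasibility together with the RSC-induced invertibility of $\Qs$ to identify the local minimum with the oracle and read off the $\ell_\infty$ bound from the mean-value identity, and invoke the $\Tth_{\min}$ condition to rule out ``mixed'' selections of $\Nonreg$, yielding the separation in (1) and the two regimes (2)--(3). The paper handles your flagged third step via a uniqueness lemma (complementary slackness forces any optimum of the surrogate to vanish on $\USupNonregC$) plus a short contradiction argument over invalid choices of $\Nonreg$; your additional care about ties and about general convex $\L$ goes beyond what the paper records but does not alter the argument.
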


\paragraph{Remarks.} The above theorem will be instantiated for the specific cases of sparse linear and sparse graphical models in subsequent corollaries (for which we will bound terms involving $\nabla\L(\Tth)$, $\Gz$ and $\Q$). Though
conditions (a) and (b) in Theorem \ref{ThmSupp} seem apparently more stringent than the case where $h=0$ (vanilla Lasso), we will see in corollaries that they are uniformly upper bounded for all selections, under the asymptotically same probability as $h=0$.  

Note also that for $h=0$, we recover the results for the vanilla $\ell_1$ norm. Furthermore, by the statement $(1)$ in the theorem, if $h<k$, $\GUSupNonreg$ only contains relevant feature indices and some relevant features are not penalized. If $h \geq k$, $\GUSupNonreg$ includes all relevant indices (and some non-relevant indices). In this case, the second term in \eqref{EqnThmInfty} disappears, but the term $\big\|\big(\GQs\big)^{-1} \nabla\L(\Tth)_{\GUSupNonreg} \big\|_{\infty}$ increases as $|\GUSupNonreg|$ gets larger. Moreover, the condition that $n \asymp (k+h) \log p$ will be violated as $h$ approaches $p$. While we do not know the true sparsity $k$ a priori in many problems, we implicitly assume that we can set $h \asymp k$ (i.e., by cross-validation). 

Now we turn to $\ell_2$ bound under the same conditions:
\begin{theorem}\label{ThmL2}
	Consider the problem with trimmed regularizer \eqref{EqnTrimmedReg2} where all conditions in Theorem \ref{ThmSupp} hold. Then, for any local minimum of \eqref{EqnTrimmedReg2}, the parameter estimation error in terms of $\ell_2$ norm is upper bounded: for some constant $\Cltwo$,
\[
		\| \Lth - \Tth \|_2  \leq 
	\begin{dcases*}
        \Cltwo \lam \left(\sqrt{k}/2 + \sqrt{k-h}\right)  & if $h < k$\\
        \Cltwo \lam \sqrt{h}/2  & otherwise
        \end{dcases*}
\]
\end{theorem}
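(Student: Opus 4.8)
The plan is to convert the support-recovery and $\ell_\infty$ control established in Theorem \ref{ThmSupp} into an $\ell_2$ bound by exploiting that the estimation error $\errt := \Lth - \Tth$ is \emph{sparse}, and then by bounding the relevant subvector directly through the stationarity identity rather than through the coarser entrywise bound.

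First, I would localize the support of $\errt$. By statements (1)--(2) of Theorem \ref{ThmSupp}, when $h<k$ every index of $\SuppC$ is estimated as zero, so $\errt$ is supported on $\Supp$ with $|\mathrm{supp}(\errt)|\le k$; when $h\ge k$, all but at most $h-k$ indices of $\SuppC$ vanish, so $\errt$ is supported on $\GUSupNonreg$ with $|\mathrm{supp}(\errt)|\le h$. This already produces the two regimes and the two governing support sizes $k$ and $h$.

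Next, I would use the PDW stationarity equation \eqref{EqnPDW} together with the identification $\Lth=\Gth$ on $\USupNonreg$ coming from the proof of Theorem \ref{ThmSupp}. Applying the mean value theorem to $\nabla\L$ along the segment from $\Tth$ to $\Lth$ and recalling $\Q$ from condition (b), the restricted equation reads $\Qs\,\errt_{\USupNonreg} = -\nabla\L(\Tth)_{\USupNonreg}-\lam\,\Gz_{\USupNonreg}$, hence $\errt_{\USupNonreg} = -(\Qs)^{-1}\big(\nabla\L(\Tth)_{\USupNonreg}+\lam\,\Gz_{\USupNonreg}\big)$. Taking $\ell_2$ norms and applying the triangle inequality splits the error into a gradient term $\|(\Qs)^{-1}\nabla\L(\Tth)_{\USupNonreg}\|_2$ and a subgradient term $\lam\,\|(\Qs)^{-1}\Gz_{\USupNonreg}\|_2$. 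I would absorb the spectral norm $\|(\Qs)^{-1}\|_{\mathrm{op}}$ into the constant $\Cltwo$, justifying the lower bound on $\lambda_{\min}(\Qs)$ via restricted strong convexity \ref{Con:rsc}: since $|\USupNonreg|\le k+h$ and $n\ge\frac{2\RSCtolOne}{\RSCcon}(k+h)\log p$, the tolerance term is dominated by the curvature $\RSCcon$, so $\Qs$ is invertible with $\|(\Qs)^{-1}\|_{\mathrm{op}}$ uniformly controlled.

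Finally I would bound the two $\ell_2$ norms by passing through $\ell_\infty$ on their small supports. For the gradient term, $\nabla\L(\Tth)_{\USupNonreg}$ has at most $k$ (resp. $h$) nonzero coordinates while $\|\nabla\L(\Tth)\|_\infty\le\lam/2$ from $\lam\ge2\|\nabla\L(\Tth)\|_\infty$, giving $\Cltwo\lam\sqrt{k}/2$ (resp. $\Cltwo\lam\sqrt{h}/2$). For the subgradient term I would choose $\Nonreg$ to be the $h$ largest absolute entries of $\Lth$: when $h<k$ these all lie in $\Supp$ (the support of $\Lth$ is $\Supp$), so $\Gz_{\USupNonreg}$ is supported on $\DSupNonreg=\Supp\setminus\Nonreg$ of size $k-h$ with entries bounded by $1$, contributing $\Cltwo\lam\sqrt{k-h}$; when $h\ge k$ statement (1) forces $\Supp\subseteq\Nonreg$, so $\DSupNonreg=\emptyset$ and the subgradient term vanishes. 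Summing the surviving contributions in each regime reproduces the stated bound. The step I expect to be the main obstacle is the bookkeeping around the choice of $\Nonreg$: the size of $\DSupNonreg$ (hence the $\sqrt{k-h}$ factor) and its vanishing for $h\ge k$ rely on selecting $\Nonreg$ consistently with the ordering guaranteed by statement (1), and one must verify that the operator-norm control on $(\Qs)^{-1}$ holds \emph{uniformly} over the relevant index set so that a single constant $\Cltwo$ serves both regimes.
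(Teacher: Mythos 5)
Your argument is correct in outline but takes a genuinely different route from the paper. The paper does \emph{not} go back through the PDW construction for the $\ell_2$ bound: it works directly with the first-order stationarity of the local minimum $(\Lth,\Lw)$, taking $\th=\Tth$ in $\langle\nabla\L(\Lth),\Lth-\th\rangle\le-\langle\partial\lam\R(\Lth;h),\Lth-\th\rangle$ to get a basic inequality, then decomposes the regularizer via the unpenalized index set $H$ (using Theorem~\ref{ThmSupp} only to place $H\subset\Supp$ when $h<k$, resp.\ $\Supp\subseteq H$ when $h\ge k$), and closes the bound with the restricted strong convexity condition \ref{Con:rsc} applied to $\langle\nabla\L(\Tth+\Lerrt)-\nabla\L(\Tth),\Lerrt\rangle$. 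That yields $\RSCcon\|\Lerrt\|_2^2\le(\lam\sqrt{k}/2+\lam\sqrt{k-h})\|\Lerrt\|_2$ (resp.\ $\lam\sqrt{h}/2\,\|\Lerrt\|_2$) and the explicit constant $\Cltwo=1/\RSCcon$, without ever inverting $\Qs$. Your route instead reuses the $\ell_\infty$ machinery: solve the restricted stationarity equation for $\errt_{\USupNonreg}$ and bound $\|(\Qs)^{-1}(\nabla\L(\Tth)_{\USupNonreg}+\lam\Gz_{\USupNonreg})\|_2$ by counting supports. The bookkeeping you do (with $\Nonreg$ the $h$ largest entries of $\Lth$, so $\USupNonreg=\Supp$ and $|\DSupNonreg|=k-h$ when $h<k$, and $\USupNonreg=\Nonreg$, $\Gz_{\USupNonreg}=0$ when $h\ge k$) reproduces exactly the $\sqrt{k}/2$, $\sqrt{k-h}$ and $\sqrt{h}/2$ factors, so the rates match. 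What each approach buys: yours is economical given that the $\ell_\infty$ analysis is already in place, and makes the two regimes transparent; the paper's is more self-contained in the sense that the constant depends only on the RSC curvature $\RSCcon$ and requires no spectral control of $(\Qs)^{-1}$. The one point you should pin down is the one you flag yourself: a \emph{quantitative}, uniform lower bound on $\lambda_{\min}(\Qs)$. Condition \ref{Con:rsc} as stated controls $\langle\Q\errt,\errt\rangle$ only in the single direction $\errt=\Gth-\Tth$, and Lemma~\ref{LemInv} asserts strict convexity (hence invertibility) on $\reals^{\USupNonreg}$ but not an explicit eigenvalue floor; to extract a clean $\Cltwo$ you need the standard strengthening $\lambda_{\min}(\Qs)\ge\RSCcon-\RSCtolOne(k+h)\log p/n\ge\RSCcon/2$ under the stated sample-size condition, which holds for the examples considered (e.g.\ least squares, where the Hessian is constant) but is an extra step relative to the paper's argument.
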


\paragraph{Remarks.}
The benefit of using trimmed $\ell_1$ over standard $\ell_1$ can be clearly seen in Theorem \ref{ThmL2}. Even though both have the same asymptotic convergence rates (in fact, standard $\ell_1$ is  already information theoretically optimal in many cases such as high-dimensional least squares), trimmed $\ell_1$ has a smaller constant: $\frac{3\Cltwo \lam\sqrt{k}}{2}$ for standard $\ell_1$ ($h=0$) vs. $\frac{\Cltwo \lam \sqrt{k}}{2}$ for trimmed $\ell_1$ ($h=k$). 
Comparing with non-convex $(\mu,\gamma$)-amenable regularizers SCAD or MCP, we can also observe that the estimation bounds are asymptotically the same: $\| \Lth - \Tth \|_{\infty} \leq c \| (\TQ)^{-1} \nabla\L(\Tth)_{\Supp} \|_{\infty}$ and $\| \Lth - \Tth \|_2 \leq c \lam \sqrt{k}$. However, the constant $c$ here for those regularizers might be too large if $\mu$ is not small enough, since it involves $\frac{1}{\RSCcon - \mu}$ term (vs. $\frac{1}{\RSCcon}$ for the trimmed $\ell_1$.)
Moreover amenable non-convex regularizers require the additional constraint $\|\th\|_1 \leq R$ in their optimization problems for theoretical guarantees, along with further assumptions on $\Tth$ and tuning parameter $R$,
and the true parameter must be feasible for their modified program (see \citet{LW17}). 
The condition $\|\bm \theta^*\|_1 \le R$ is stringent with respect to the analysis: as $p$ and $k$ increase, in order for $R$ to remain constant, $\|\bm \theta^*\|_\infty$ must shrink to get satisfactory theoretical bounds. 
In contrast, while choosing the trimming parameter $h$ requires cross-validation, it is possible to set $h$ on a similar order as $k$.

We are now ready to apply our main theorem to the popular high-dimensional problems introduced in Section \ref{Sec:Setup}: sparse linear regression and sparse graphical model estimation. Due to space constraint, the results for sparse graphical models are provided in the supplementary materials.  

\subsection{Sparse Linear Regression} \label{sec:slr}
Motivated by the information theoretic bound for arbitrary methods, all previous analyses of sparse linear regression assume $n \geq c_0 k \log p$ for sufficiently large constant $c_0$. We also assume $n \geq c_0 \max\{k,h\} \log p$, provided $h \asymp k$.
\begin{corollary}\label{CorLS}
	Consider the model \eqref{EqnLinearModel} where $\e$ is sub-Gaussian. Suppose we solve \eqref{EqnLS} with the selection of:
 	\begin{enumerate}[leftmargin=0.25cm, itemindent=0.45cm,label=(\alph*)]
 		\item $\lam \geq \cLStwo \sqrt{\frac{\log p}{n}}$ for some constant $\cLStwo$ depending only on the sub-Gaussian parameters of $X$ and $\e$
 		\item $h$ satisfying: for any selection of $\Nonreg \subseteq [p] \text{ s.t. } |\Nonreg| = h$,
	 		\begin{align}\label{EqnLSIncoh}
				& \matnormBig{\big(\GG^{-1}\big)_{\USupNonreg \USupNonreg}}{\infty}  \leq \cLSone, \qquad \matnormBig{\GG_{\USupNonregC\USupNonreg} \Big(\GG_{\USupNonreg\USupNonreg}\Big)^{-1}}{\infty} \leq \eta, \nonumber\\
				&\max\Big\{\lambda_{\max}(\GG_{\USupNonregC \USupNonregC}),\lambda_{\max}\big((\GG_{\USupNonreg \USupNonreg})^{-1}\big)\Big\} \leq \cLSsix 				 
			\end{align}
			where $\GG = \frac{\X^\top X}{n}$ is the sample covariance matrix and $\lambda_{\max}$ is the maximum singular value of a matrix.
	\end{enumerate}
 	Further suppose $\frac{1}{2}\Tth_{\min} \geq \cLSthree \sqrt{\frac{\log p}{n}} + \lam \cLSone$ for some constant $\cLSthree$.
 	Then with high probability at least $1-\cLSfour \exp (-\cLSfive \log p)$, any local minimum $\Lth$ of \eqref{EqnLS} satisfies 
 	\begin{enumerate}[leftmargin=0.25cm, itemindent=0.45cm,label=(\alph*)]
 		\item for every pair $j_1 \in \Supp, j_2 \in \SuppC$, we have $|\Lth_{j_1}| > |\Lth_{j_2}|$,
 		\item if $h < k$, all $j \in \SuppC$ are successfully estimated as zero and we have
 			\begin{align*}
 				& \|\Lth -\Tth\|_\infty  \leq \cLSthree \sqrt{\frac{\log p}{n}} + \lam \cLSone , \nonumber\\
 				& \| \Lth - \Tth \|_2  \leq c_4 \sqrt{\frac{\log p}{n}} \left(\sqrt{k}/2 + \sqrt{k-h}\right) \, .  			
 			\end{align*}
 		\item if $h \geq k$, at least the smallest $p-h$ entries in $\SuppC$ have exactly zero and we have
 			\begin{align*}
				\|\Lth -\Tth\|_\infty \leq \cLSthree \sqrt{\frac{\log p}{n}} , \ \ \| \Lth - \Tth \|_2  \leq \frac{c_4}{2} \sqrt{\frac{h \log p }{n}} \, .
 			\end{align*}
  	\end{enumerate}
\end{corollary}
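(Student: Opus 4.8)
The plan is to verify each hypothesis of Theorems~\ref{ThmSupp} and~\ref{ThmL2} for the least-squares instance and then read off the conclusions, funneling all the randomness through two selection-independent events that control the design and the noise. First I would record the relevant quantities for the quadratic loss $\L(\th)=\frac1n\|\X\th-\y\|_2^2$: its gradient at the truth is $\nabla\L(\Tth)=-\frac2n\X^\top\e$, and its Hessian is the constant matrix $\frac2n\X^\top\X=2\GG$, so the integrated Hessian is $\Q=2\GG$ and every block inverse in the theorems, e.g. $(\Qs)^{-1}$, equals $\frac12(\GG_{\USupNonreg\USupNonreg})^{-1}$. Condition~\ref{Con:diff} is immediate. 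For~\ref{Con:rsc} I would invoke the standard restricted strong convexity results for sub-Gaussian designs~\citep{Raskutti2010,NRWY12}, which supply curvature $\RSCcon$ and tolerance $\RSCtolOne$ on a single concentration event for $\GG$; this is where the sample-size requirement $n\gtrsim(k+h)\log p$ (matching $|\USupNonreg|\le k+h$, equivalently $n\ge c_0\max\{k,h\}\log p$) enters.

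Next I would control the gradient and thereby justify the choice of $\lam$. A sub-Gaussian tail bound for each coordinate of $\frac1n\X^\top\e$ followed by a union bound over the $p$ coordinates gives $\frac2n\|\X^\top\e\|_\infty\le\cLStwo\sqrt{\tfrac{\log p}{n}}$ with probability $1-\cLSfour\exp(-\cLSfive\log p)$, so that $\lam\ge\cLStwo\sqrt{\log p/n}\ge 2\|\nabla\L(\Tth)\|_\infty$ as required. The key structural point is that $\|\X^\top\e\|_\infty$ is a maximum over all $p$ coordinates and therefore does not depend on the choice of $\Nonreg$.

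Then I would discharge the two abstract hypotheses of Theorem~\ref{ThmSupp} uniformly over all selections $\Nonreg$. For strict dual feasibility (a), writing the PDW stationarity equation~\eqref{EqnPDW} for the quadratic loss and eliminating $\Gth_{\USupNonreg}-\Tth_{\USupNonreg}$ yields a closed form for $\Gz_{\USupNonregC}$ whose three terms are bounded using the mutual-incoherence hypothesis $\matnorm{\GG_{\USupNonregC\USupNonreg}(\GG_{\USupNonreg\USupNonreg})^{-1}}{\infty}\le\eta$, the subgradient bound $\|\Gz_{\USupNonreg}\|_\infty\le1$, and the noise bound above, giving $\|\Gz_{\USupNonregC}\|_\infty\le\frac12+\frac{3\eta}{2}\le1-\delta$ for $\eta$ small. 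For the signal condition (b) I would bound $\|(\Qs)^{-1}\nabla\L(\Tth)_{\USupNonreg}\|_\infty\le\frac1n\matnorm{(\GG_{\USupNonreg\USupNonreg})^{-1}}{\infty}\|\X^\top\e\|_\infty\lesssim\cLSthree\sqrt{\log p/n}$ (a sharper route uses that $(\GG_{\USupNonreg\USupNonreg})^{-1}\frac1n\X^\top\e$ has conditional covariance $\frac{\sigma^2}{n}(\GG_{\USupNonreg\USupNonreg})^{-1}$, controlled by $\lambda_{\max}((\GG_{\USupNonreg\USupNonreg})^{-1})\le\cLSsix$) together with $\lam\matnorm{(\Qs)^{-1}}{\infty}\le\lam\cLSone$, so the assumed $\frac12\Tth_{\min}\ge\cLSthree\sqrt{\log p/n}+\lam\cLSone$ suffices. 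Feeding these into Theorem~\ref{ThmSupp} gives the ordering claim (a) and the $\ell_\infty$ bounds (b)--(c), where for $h\ge k$ the $\lam$-term vanishes and only $\|(\GQs)^{-1}\nabla\L(\Tth)_{\GUSupNonreg}\|_\infty\lesssim\cLSthree\sqrt{\log p/n}$ survives; since Theorem~\ref{ThmL2} assumes exactly the conditions just verified, its $\ell_2$ bounds transfer by substituting $\lam\asymp\sqrt{\log p/n}$ and absorbing $\Cltwo\cLStwo$ into $c_4$.

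The main obstacle is making conditions (a)--(b) hold \emph{simultaneously} over all $\binom{p}{h}$ choices of $\Nonreg$ without a union-bound penalty that would inflate the rate by roughly $\sqrt{h}$. The resolution, and the reason the stated probability matches the vanilla-Lasso ($h=0$) case, is that once the deterministic hypotheses of the corollary (the incoherence, row-sum, and eigenvalue bounds, assumed for \emph{every} selection) are applied, the only random object remaining in each estimate is the selection-independent scalar $\|\X^\top\e\|_\infty$, together with the single RSC event for $\GG$. All $\binom{p}{h}$ selections therefore share the same two high-probability events, so no further union bound over selections is incurred.
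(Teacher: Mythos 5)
Your proposal is correct and follows the same overall strategy as the paper's proof: instantiate Theorems~\ref{ThmSupp} and~\ref{ThmL2} for the quadratic loss, establish \ref{Con:rsc} by citing standard restricted strong convexity results for sub-Gaussian designs (the paper invokes Corollary 1 of \citet{LW15}), and verify strict dual feasibility by solving the block PDW system for $\Gz_{\USupNonregC}$ under the incoherence hypotheses \eqref{EqnLSIncoh}. Where you genuinely diverge is in how the random quantities are controlled uniformly over the $\binom{p}{h}$ choices of $\Nonreg$. The paper keeps the selection-dependent structure: it rewrites the dual-feasibility residual as $\frac{1}{n}\X_{\USupNonregC}^\top\Pi\e$ with $\Pi$ the projector orthogonal to the column span of $\X_{\USupNonreg}$, and the $\ell_\infty$ error as $v_j^\top\e$ with $\|v_j\|_2^2\le \cLSsix/n$, then applies sub-Gaussian tail bounds plus a union bound over coordinates, asserting somewhat tersely that this covers all selections. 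You instead push every random term through the single selection-independent scalar $\|\frac{1}{n}\X^\top\e\|_\infty$ via the deterministic operator-norm hypotheses, so that only one concentration event (plus the RSC event) is ever needed; this makes the uniformity claim transparent at the cost of slightly cruder constants (your dual-feasibility bound $\frac{1}{2}+\frac{3\eta}{2}$ needs $\eta<1/3$, whereas the paper's projection argument needs only $\eta<1$ together with $\lam \gtrsim \frac{1}{1-\eta}\sqrt{\log p / n}$). Both routes deliver the stated conclusions; yours is arguably the cleaner justification of the paper's remark that conditions (a)--(b) of Theorem~\ref{ThmSupp} hold ``under the asymptotically same probability as $h=0$.''
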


\paragraph{Remarks.} The conditions in Corollary \ref{CorLS} are also used in previous work and may be shown to hold with high probability via standard concentration bounds for sub-Gaussian matrices. In particular \eqref{EqnLSIncoh} is known as an incoherence condition for sparse least square estimators \citep{Wainwright06_new}. In the case of vanilla Lasso, estimation will fail if the incoherence condition is violated \citep{Wainwright06_new}. 
In contrast, we confirm by simulations in Section \ref{Sec:Exp} that the trimmed $\ell_1$ problem \eqref{EqnLS} can succeed even when this condition is not met. Therefore we conjecture that the incoherence condition could be relaxed in our case, similarly to the case of non-convex $\mu$-amenable regularizers such as SCAD or MCP \citep{LW17}. Proving this conjecture is highly non-trivial, since our penalty is based on a sum of absolute values, which is not $\mu$-amenable; we leave the proof for future work.


\begin{algorithm}[t]
\caption{Block Coordinate Descent for \eqref{EqnTrimmedReg1}}
\label{alg:bcd}
\begin{algorithmic}
\State {\bfseries Input:} $\lambda$, $\eta$, and $\tau$.
\State {\bfseries Initialize:} $\bm \theta^0$, $\bm w^0$, and $k=0$.
\While{not converged}
\Let{$\bm w^{k+1}$}{$\mathrm{proj}_{\cS}[\bm w^k - \tau \bm r(\bm \theta^{k})]$}
\Let{$\bm \theta^{k+1}$}{$\mathrm{prox}_{\eta\lambda\R(\cdot,\bm w^{k+1})}[\bm \theta^k - \eta \nabla \L(\bm\theta^k)]$}
\Let{$k$}{$k+1$}
\EndWhile
\State {\bfseries Output:} $\bm \theta^k$, $\bm w^k$.
\end{algorithmic}
\end{algorithm}

We develop and analyze a block coordinate descent algorithm for solving objective \eqref{EqnTrimmedReg1}, which is highly nonconvex problem because of the coupling of $w$ and $\theta$ in the regularizer. 
The block-coordinate descent algorithm uses simple nonlinear operators: 
\[
\begin{aligned}
\proj_{\mathcal{S}}(\bm z) &:= \arg\min_{\bm w \in \mathcal{S}} \frac{1}{2}\|\bm z - \bm w\|^2\\
\prox_{\eta\lambda \mathcal{R}(\cdot, \bm w^{k+1})} (\bm z) &:= \arg\min_{\bm \theta} \frac{1}{2\eta\lambda} \|\bm \theta - \bm z \|^2 + \sum_{j=1}^p w_j^{k+1} |\theta_{j}|
\end{aligned}
\]
Adding a block of weights $\bm w$ decouples  the problem into simply computable pieces.
Projection onto a polyhedral set is straightforward, while the prox operator is a weighted soft thresholding step. 

We analyze Algorithm~\ref{EqnTrimmedReg1} using the structure of~\eqref{EqnTrimmedReg1} instead of relying on the DC formulation for~\eqref{EqnTrimmedReg2}. 
The convergence analysis is summarized in Theorem~\ref{th:con} below. The analysis centers on the  
general objective function
\begin{equation}
\label{eq:model}
\min_{\bm \theta, \bm w} F(\bm \theta, \bm w) := \L(\bm \theta) + \lambda \sum_{i=1}^p w_i r_i(\bm \theta) + \delta(\bm w| \cS), 
\end{equation}
where  $\delta(\bm w| \cS)$ enforces $w \in \cS$. We let 
\[
\bm r(\bm \theta) = \begin{bmatrix}
r_1(\bm x)&
\dots&
r_p(\bm x)
\end{bmatrix}^T, \R(\bm \theta, \bm w) = \inner{\bm w}{\bm r(\bm \theta)}.
\]
\textcolor{black}{In the case of trimmed $\ell_1,$ $r$ is the $\ell_1$ norm, $r_i(x)=|x_i|$ and $\cS$ encodes 
the constraints $0 \leq w_i \leq 1$, $\bm 1^T\bm w= p-h$.}

We make the following assumptions.
\begin{assumption}
\label{assumption}
(a)  $\L$ is a smooth closed convex function with an $L_f$-Lipchitz continuous gradient; (b) $r_i$ are convex, and $L_r$-Lipchitz continuous and
(c) $\cS$ is a closed convex set and $F$ is bounded below.
\end{assumption}

In the non-convex setting, we do not have access to distances to optimal iterates or best function values, 
as we do for strongly convex and convex problems. Instead, we use distance to stationarity to analyze the algorithm.
Objective~\eqref{eq:model} is highly non-convex, so we design a stationarity criterion, which 
goes to $0$ as we approach stationary points.  The analysis then shows 
Algorithm~\ref{EqnTrimmedReg1} drives this measure to $0$, i.e. converges to stationarity. 
In our setting, every stationary point of~\eqref{EqnTrimmedReg1}
corresponds to a local optimum in $\bm w$ with $\bm \theta$ fixed, 
and a local optimum in $\bm \theta$ with $\bm w$ fixed. 

\begin{definition}[Stationarity]
Define the stationarity condition $T(\bm \theta, \bm w)$ by
\begin{equation}
\label{eq:stationarity}
\begin{aligned}
T(\bm \theta, \bm w) = \min\{\|\bm u\|^2 + \|\bm v\|^2 : &\bm u \in \partial_\theta F(\bm \theta, \bm w),\\
&\bm v \in \partial_w F(\bm \theta, \bm w)\}.
\end{aligned}
\end{equation}
The pair $(\bm \theta, \bm w)$ is a stationary point when $T(\bm \theta, \bm w) = 0$.
\end{definition}

\begin{theorem}
\label{th:con}
Suppose Assumptions~\ref{assumption} (a-c) hold, and define the quantity $\mathcal{G}$ as follows:
\[
\mathcal{G}_k := \frac{L_f}{2}\|\bm \theta^{k+1} - \bm \theta^k\|^2 + \frac{\lambda}{\tau}\|\bm w^{k+1}- \bm w^k\|^2.
\] 
With step size $\eta = 1/L_f$, we have,
\[
\begin{aligned}
\min_{k} \mathcal{G}_k &\leq \frac{1}{K}\sum_{k=1}^K\mathcal{G}_k \le \frac{1}{K}(F(\bm \theta^1) - F^*) \\
T(\bm \theta^{k+1}, \bm w^{k+1}) &\le (4 + 2\lambda L_r/L_f) \mathcal{G}_k,
\end{aligned}
\]
and therefore
\[
\min_{k = 1: K} \{T(\bm \theta^{k}, \bm w^{k})\} \leq\frac{4 + 2\lambda L_r/L_f}{K}(F(\bm \theta^1) - F^*).
\]
\end{theorem}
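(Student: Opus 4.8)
The plan is to establish a per-iteration \emph{sufficient decrease} of $F$ by at least $\mathcal{G}_k$, telescope it to obtain the first chain of inequalities, and then show that the stationarity measure $T$ at the new iterate is controlled by the same quantity $\mathcal{G}_k$ by exhibiting explicit subgradients built from the optimality conditions of the two block updates.

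\medskip
\noindent\textbf{Sufficient decrease.} I would split each iteration into its two block updates and bound the decrease of $F$ separately. For the $\bm w$-step, note that $F(\bm\theta^k,\cdot)$ is affine in $\bm w$ (up to the indicator of $\cS$), with gradient $\lambda\bm r(\bm\theta^k)$. Writing the variational inequality characterizing $\bm w^{k+1} = \proj_{\cS}[\bm w^k - \tau\bm r(\bm\theta^k)]$ and testing it against $\bm w^k\in\cS$ gives $\|\bm w^{k+1}-\bm w^k\|^2 \le \tau\inner{\bm r(\bm\theta^k)}{\bm w^k-\bm w^{k+1}}$, hence
\[
F(\bm\theta^k,\bm w^k) - F(\bm\theta^k,\bm w^{k+1}) = \lambda\inner{\bm r(\bm\theta^k)}{\bm w^k - \bm w^{k+1}} \ge \tfrac{\lambda}{\tau}\|\bm w^{k+1}-\bm w^k\|^2.
\]
For the $\bm\theta$-step, since $\lambda\R(\cdot,\bm w^{k+1})$ is convex (the $r_i$ are convex and $w_i\ge 0$) and $\L$ has an $L_f$-Lipschitz gradient, the standard proximal-gradient descent lemma with $\eta = 1/L_f$ yields $F(\bm\theta^k,\bm w^{k+1}) - F(\bm\theta^{k+1},\bm w^{k+1}) \ge \tfrac{L_f}{2}\|\bm\theta^{k+1}-\bm\theta^k\|^2$. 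Chaining the two bounds gives $F(\bm\theta^{k+1},\bm w^{k+1}) \le F(\bm\theta^k,\bm w^k) - \mathcal{G}_k$; summing over $k=1,\dots,K$ and using that $F$ is bounded below by $F^*$ produces $\min_k\mathcal{G}_k \le \frac{1}{K}\sum_k\mathcal{G}_k \le \frac1K(F(\bm\theta^1,\bm w^1)-F^*)$.

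\medskip
\noindent\textbf{Controlling stationarity.} For the second inequality I would construct admissible subgradients at $(\bm\theta^{k+1},\bm w^{k+1})$. The $\bm\theta$-prox optimality reads $-\tfrac1\eta(\bm\theta^{k+1}-\bm\theta^k) - \nabla\L(\bm\theta^k)\in\lambda\,\partial_\theta\R(\bm\theta^{k+1},\bm w^{k+1})$, so $\bm u := \nabla\L(\bm\theta^{k+1}) - \nabla\L(\bm\theta^k) - \tfrac1\eta(\bm\theta^{k+1}-\bm\theta^k)\in\partial_\theta F(\bm\theta^{k+1},\bm w^{k+1})$, and $L_f$-Lipschitzness of $\nabla\L$ with $\eta=1/L_f$ gives $\|\bm u\|\le 2L_f\|\bm\theta^{k+1}-\bm\theta^k\|$. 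For the $\bm w$-block, the projection residual $\bm w^k - \tau\bm r(\bm\theta^k) - \bm w^{k+1}$ lies in the normal cone $N_{\cS}(\bm w^{k+1})$; since this is a cone I may rescale it by $\lambda/\tau$ and add the current gradient $\lambda\bm r(\bm\theta^{k+1})$ to obtain $\bm v := \lambda\big(\bm r(\bm\theta^{k+1})-\bm r(\bm\theta^k)\big) + \tfrac\lambda\tau(\bm w^k - \bm w^{k+1})\in\partial_w F(\bm\theta^{k+1},\bm w^{k+1})$, which the Lipschitz bound on $\bm r$ controls by $\|\bm v\|\le \lambda L_r\|\bm\theta^{k+1}-\bm\theta^k\| + \tfrac\lambda\tau\|\bm w^{k+1}-\bm w^k\|$. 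Since $T(\bm\theta^{k+1},\bm w^{k+1})\le\|\bm u\|^2+\|\bm v\|^2$, re-expressing the right-hand side in terms of the two pieces of $\mathcal{G}_k$ yields $T(\bm\theta^{k+1},\bm w^{k+1})\le(4+2\lambda L_r/L_f)\mathcal{G}_k$, and combining with the decrease bound gives the final rate.

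\medskip
\noindent\textbf{Main obstacle.} The delicate point is the $\bm w$-block: because the coupling term $\lambda\inner{\bm w}{\bm r(\bm\theta)}$ appears in \emph{both} partial subdifferentials, the subgradient $\bm v$ must be assembled from the projection residual (evaluated at $\bm\theta^k$) together with the gradient $\lambda\bm r(\bm\theta^{k+1})$ at the new point; matching the two requires rescaling the normal-cone element—legitimate only because $N_{\cS}$ is a cone—and invoking Lipschitz continuity of the vector map $\bm r$ to absorb the mismatch $\lambda(\bm r(\bm\theta^{k+1})-\bm r(\bm\theta^k))$. Tracking these constants carefully, and reconciling the units of $\|\bm u\|^2,\|\bm v\|^2$ against the mixed weights $L_f/2$ and $\lambda/\tau$ appearing in $\mathcal{G}_k$, is what produces the explicit factor $4+2\lambda L_r/L_f$ and is the step most prone to bookkeeping error.
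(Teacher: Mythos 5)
Your proof follows essentially the same route as the paper's: a per-iteration sufficient decrease of $F$ by $\mathcal{G}_k$ derived from the optimality conditions of the projection and prox steps, telescoping against the lower bound $F^*$, and then bounding $T(\bm\theta^{k+1},\bm w^{k+1})$ by assembling subgradients from those same optimality conditions—your $\bm u$ and $\bm v$ are precisely the two inclusions the paper records at the start of its proof (with the normal-cone rescaling handling the $\lambda/\tau$ mismatch). If anything you give more detail than the paper, which merely asserts the stationarity bound $T \le (4+2\lambda L_r/L_f)\mathcal{G}_k$ without carrying out the constant bookkeeping, and you split the descent into two block decreases where the paper runs one combined chain; the two are equivalent.
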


\textcolor{black}{The trimmed $\ell_1$ problem satisfies Assumption~\ref{assumption} and hence Theorem~\ref{th:con} holds.}
Algorithm~\ref{alg:bcd} for~\eqref{EqnTrimmedReg1} 
converges at a sublinear rate measured using the distance to stationarity $T$~\eqref{eq:stationarity}, see Theorem~\ref{th:con}. 
In the simulation experiments of Section~\ref{Sec:Exp}, we will observe that the iterates converge to very close points regardless of initializations.
\citet{khamaru2018convergence} use similar concepts to analyze their DC-based algorithm, since it is also developed 
for a nonconvex model.

We include a small numerical experiment, comparing Algorithm 1 
with~Algorithm 2 of~\cite{khamaru2018convergence}. 
The authors proposed multiple approaches for DC programs; the prox-type algorithm (Algorithm 2) did particularly well for subset selection, see Figure 2 of~\cite{khamaru2018convergence}.
We generate Lasso simulation data with variables of dimension $500$, and $100$ samples. 
The number of nonzero elements in the true generating variable is 10. We take $h=25$,
and apply both Algorithm~\ref{alg:bcd} and Algorithm 2 of \cite{khamaru2018convergence}. 
Initial progress of the methods is comparable, 
but Algorithm~\ref{alg:bcd} continues at a linear rate to a lower value of the objective, while 
Algorithm 2 of~\cite{khamaru2018convergence} tapers off at a higher objective value. 
We consistently observe this phenomenon for a broad range of settings, regardless of hyperparameters; 
see convergence comparisons in Figure~\ref{fig:alg_compare} for  $\lambda \in \{0.5, 5, 20\}$. 
This comparison is very brief; we leave a detailed study comparing Algorithm~\ref{alg:bcd} with 
DC-based algorithms to future algorithmic work, along with further analysis of Algorithm~\ref{alg:bcd} and its variants 
under the Kurdyka-Lojasiewicz assumption~\citep{attouch2013convergence}.

\section{Experimental Results}\label{Sec:Exp}

\paragraph{Simulations for sparse linear regression.}

\begin{figure}[t]
\vskip 0pt
\centering
\subfigure[$\lambda = 0.5$]{\includegraphics[width=0.32\linewidth]{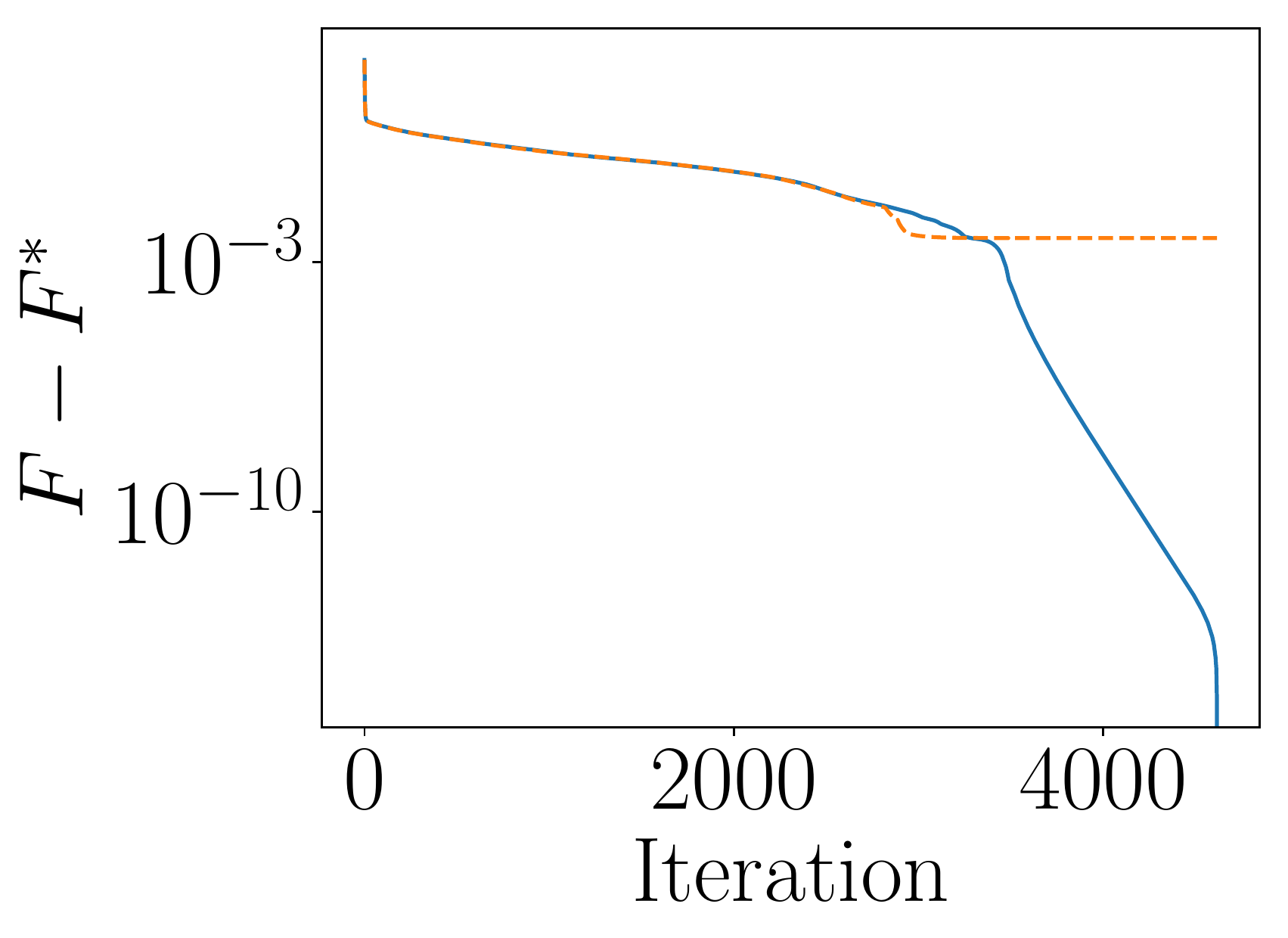}}
\subfigure[$\lambda = 5$]{\includegraphics[width=0.32\linewidth]{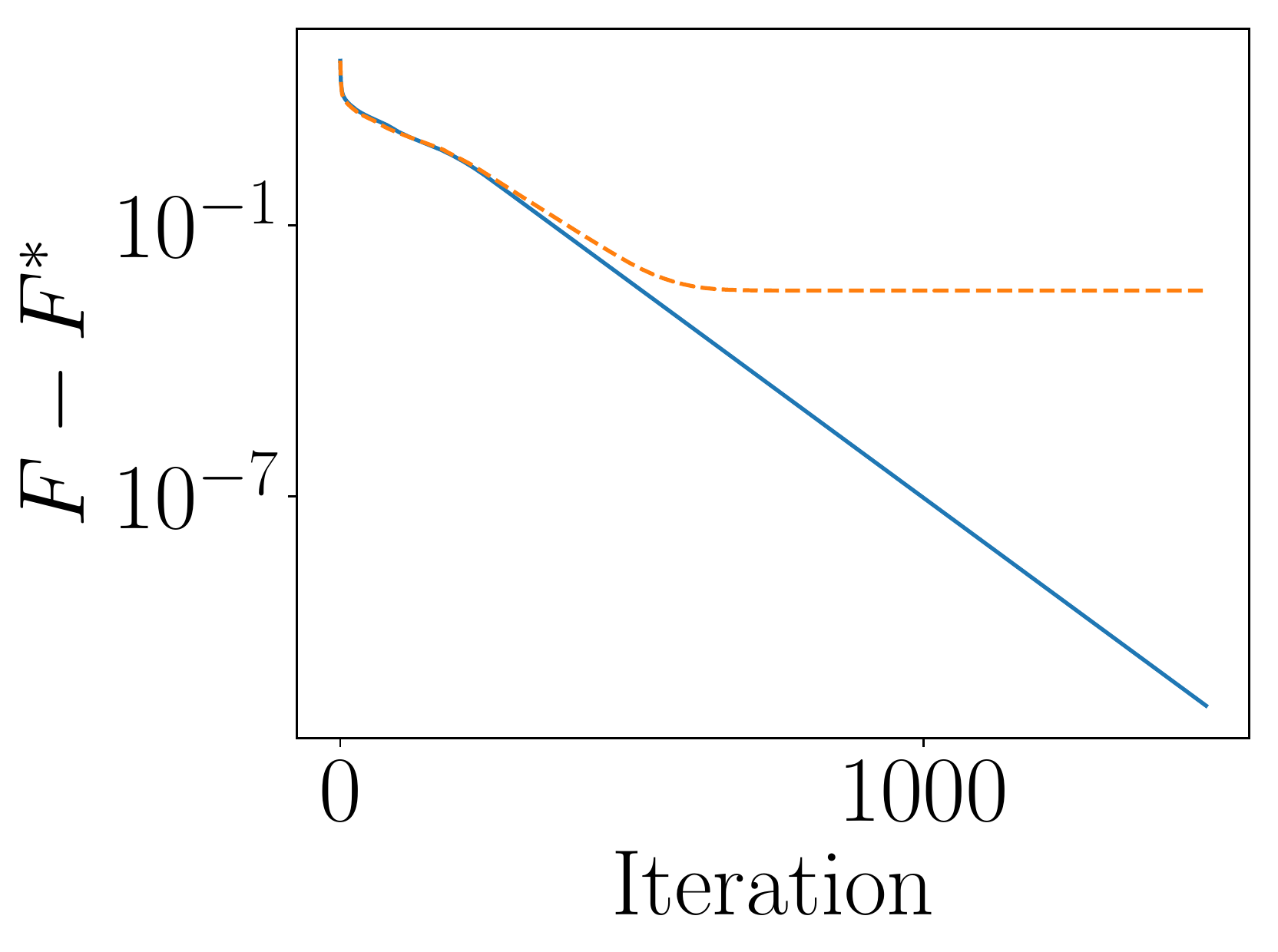}}
\subfigure[$\lambda = 20$]{\includegraphics[width=0.32\linewidth]{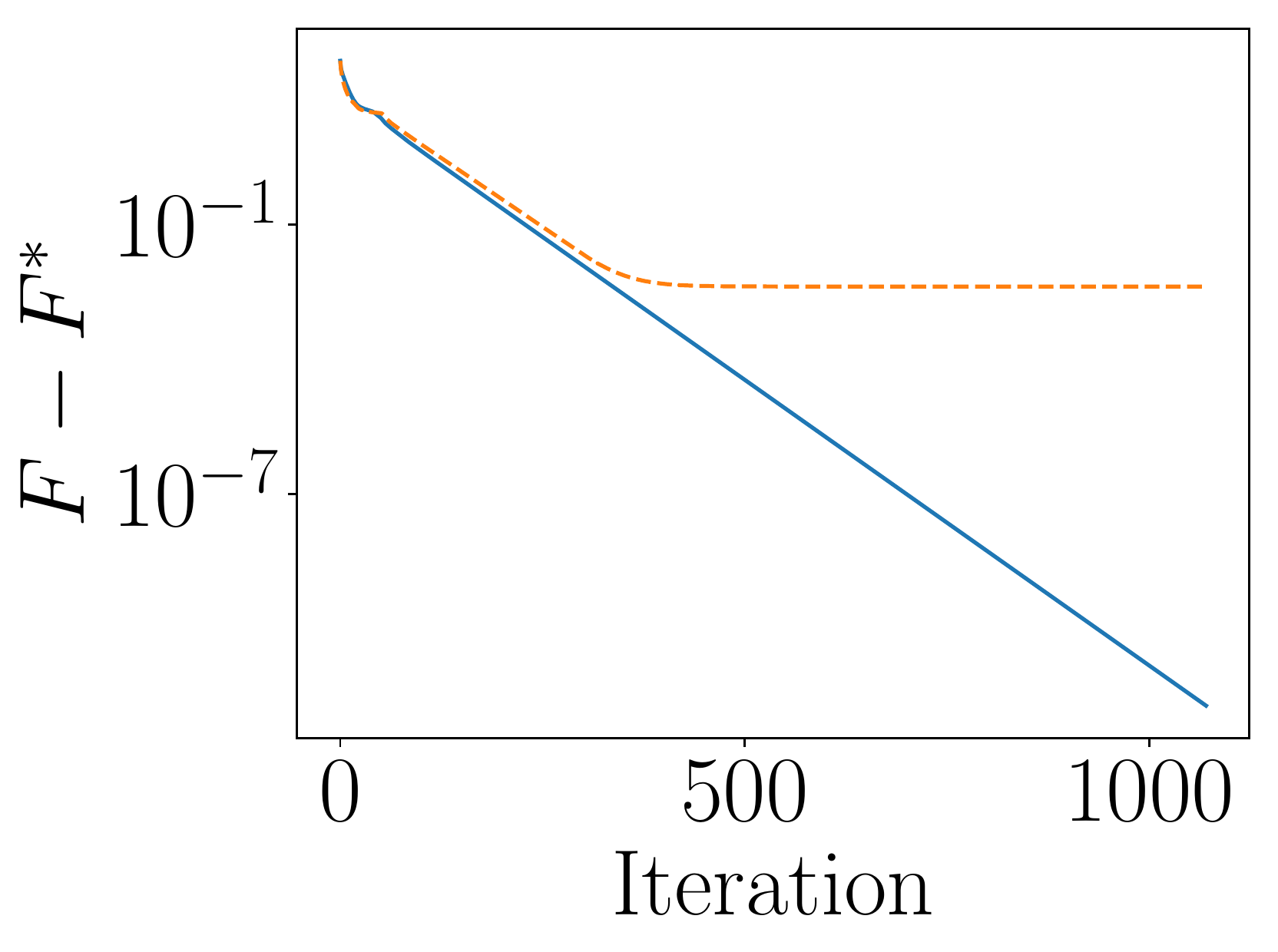}}
\caption{Convergence of Algorithm~\ref{alg:bcd} (blue solid) vs. Algorithm 2 of \cite{khamaru2018convergence} (orange dot). We see consistent results across parameter settings.}
\label{fig:alg_compare}
\end{figure}

\begin{figure}[t]
	   \setlength{\belowcaptionskip}{10pt}
		\centering
		\subfigure[$p=128, k=8$]{\includegraphics[width=0.32\linewidth]{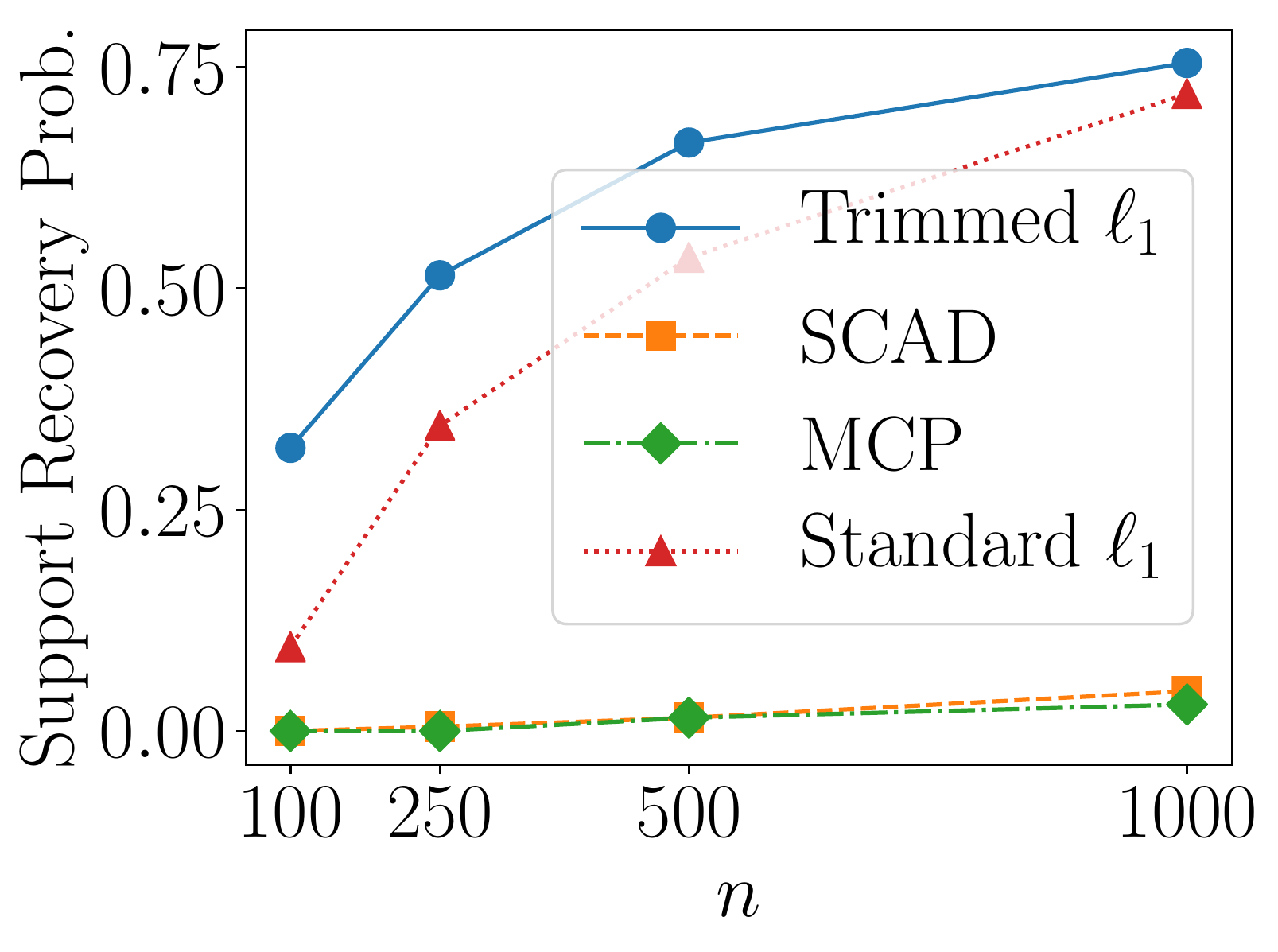}}
		\subfigure[$p=256, k=16$]{\includegraphics[width=0.32\linewidth]{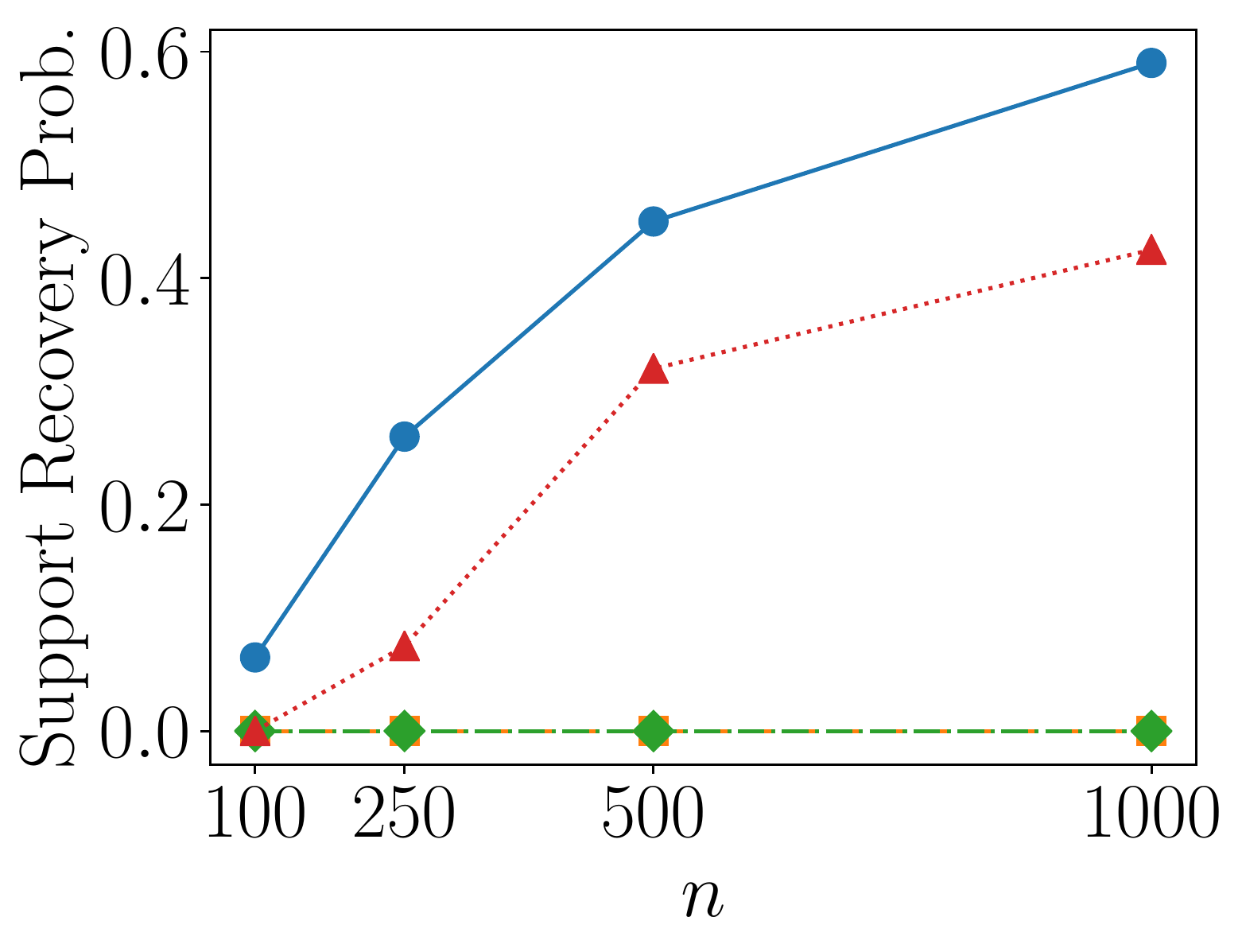}}
		\subfigure[$p=512, k=32$]{\includegraphics[width=0.32\linewidth]{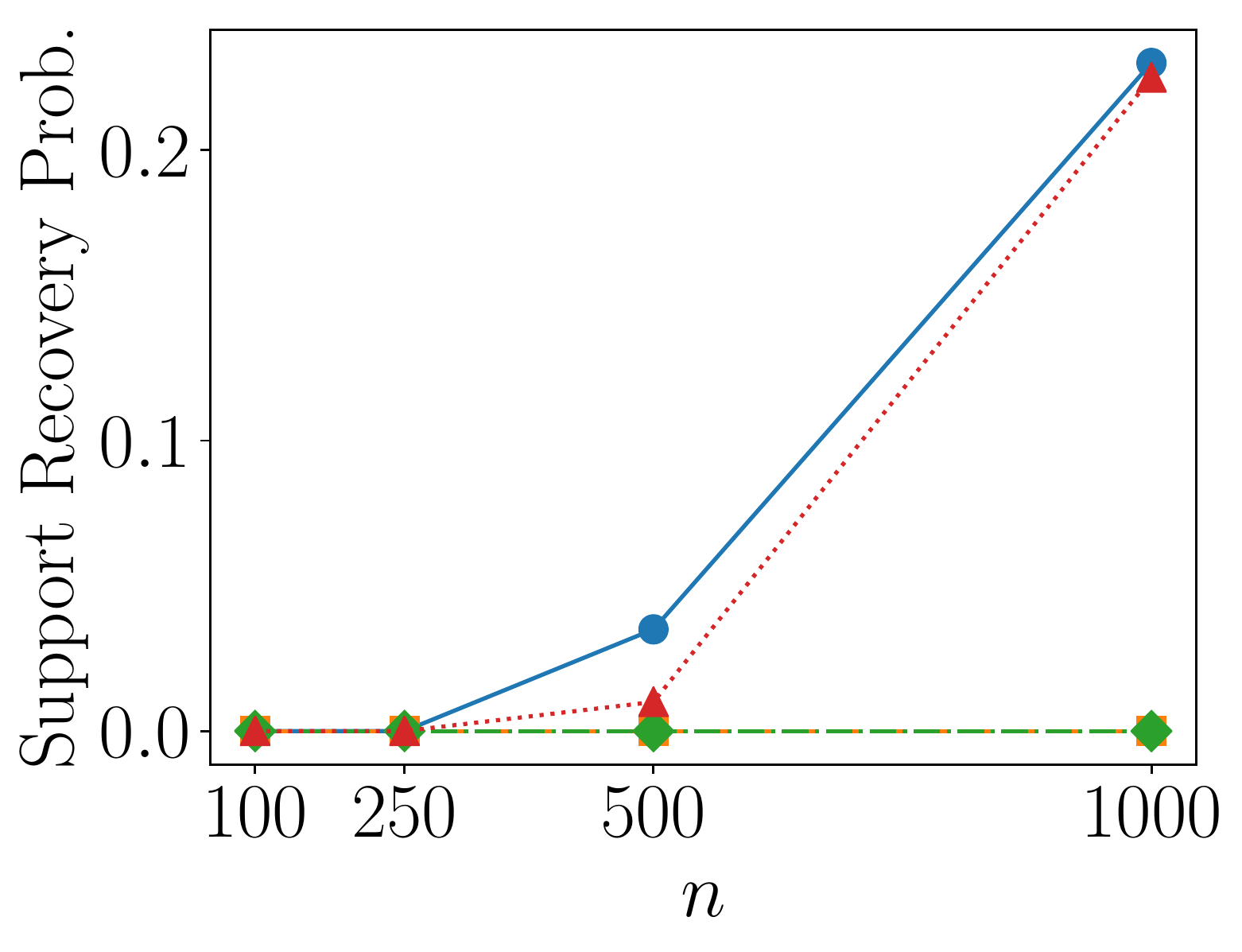}}
		\subfigure[Stationarity]{\includegraphics[width=0.32\linewidth]{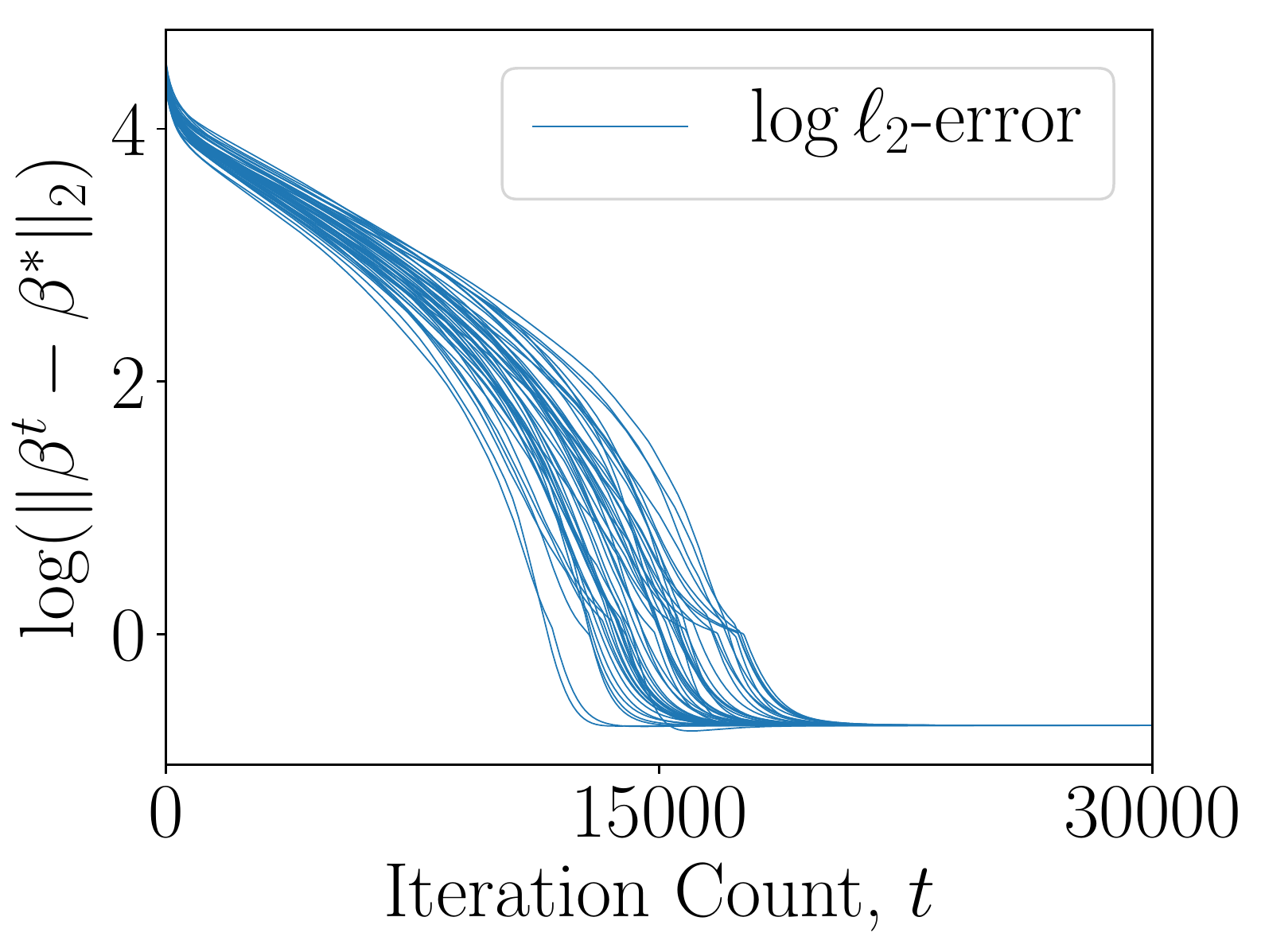}} 
		\subfigure[$\log \ell_2$-errors]{\includegraphics[width=0.32\linewidth]{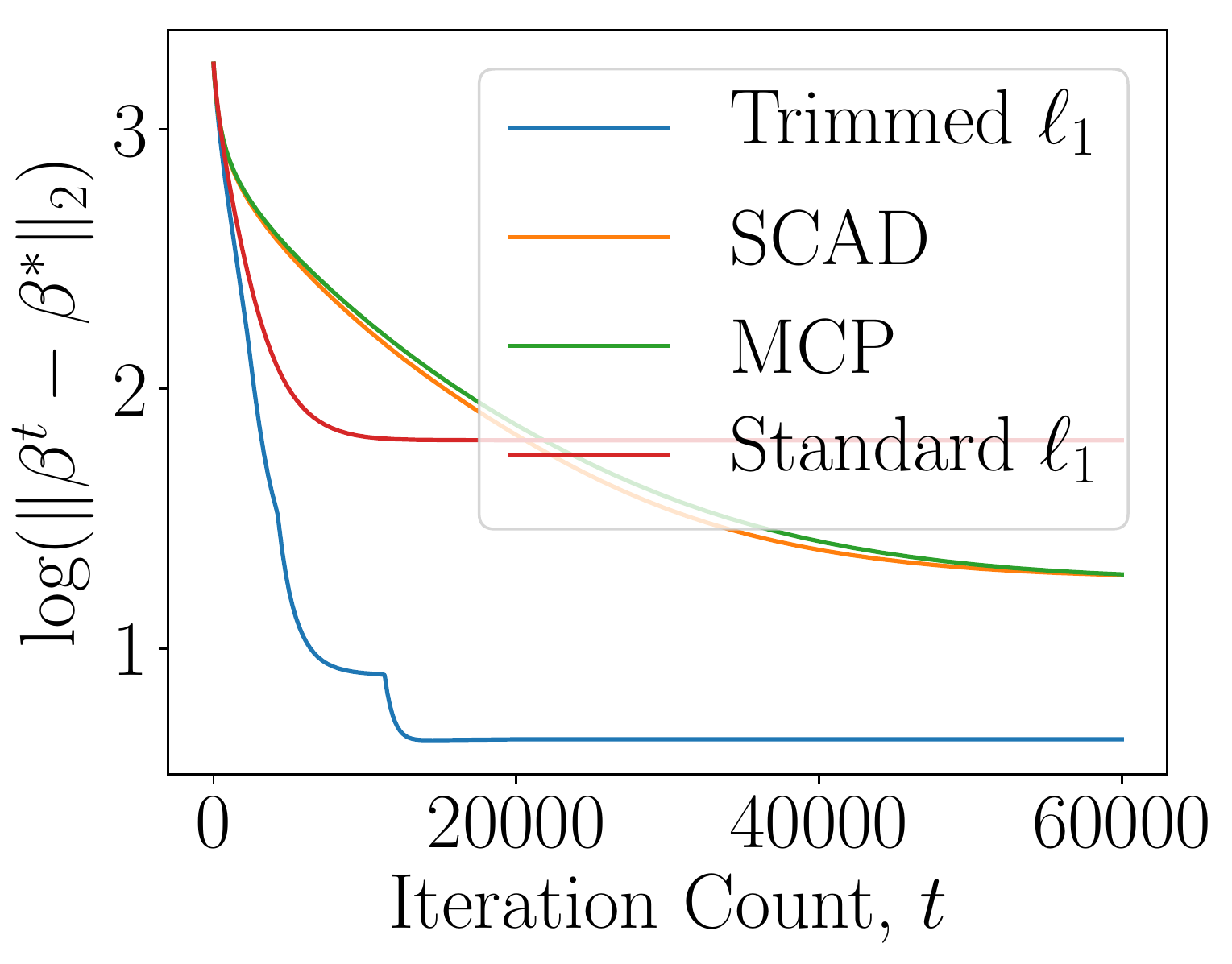}}
		\caption{Results for the incoherent case of the first experiments. \textbf{(a)}$\sim$\textbf{(c)}: Probability of sucessful support recovery for Trimmed $\ell_1$, SCAD, MCP, and standard $\ell_1$ as sample size $n$ increases. For \textbf{(d)}, \textbf{(e)}, we adopt the high-dimensional setting with $(n, p, k) = (160, 256, 16)$, and use 50 random initializations.}
		\label{Fig:incoherence_reg}
		
\end{figure}
	
\begin{figure}[t]
		\centering
		\subfigure[$p=128, k=8$]{\includegraphics[width=0.32\linewidth]{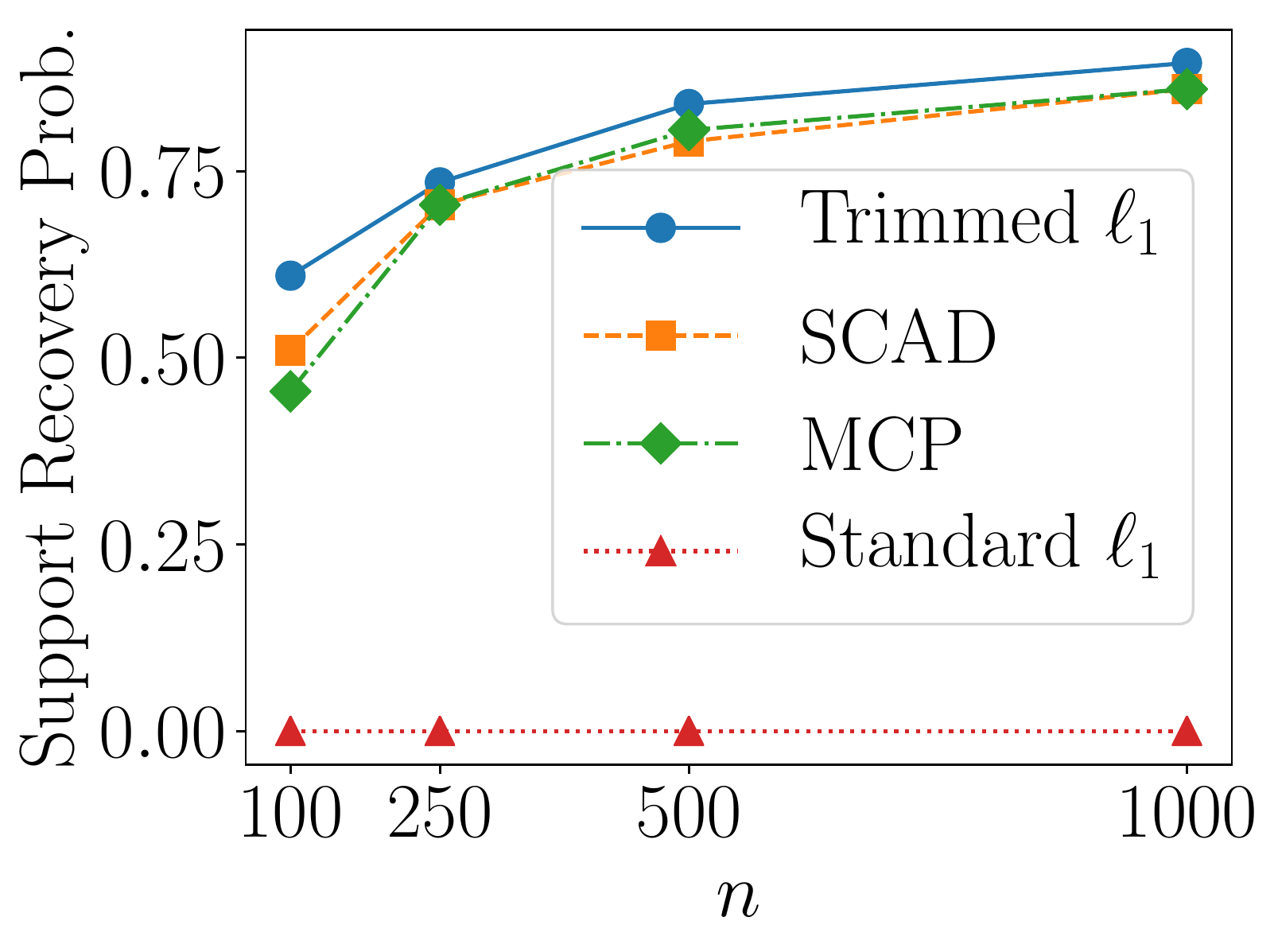}}
		\subfigure[$p=256, k=16$]{\includegraphics[width=0.32\linewidth]{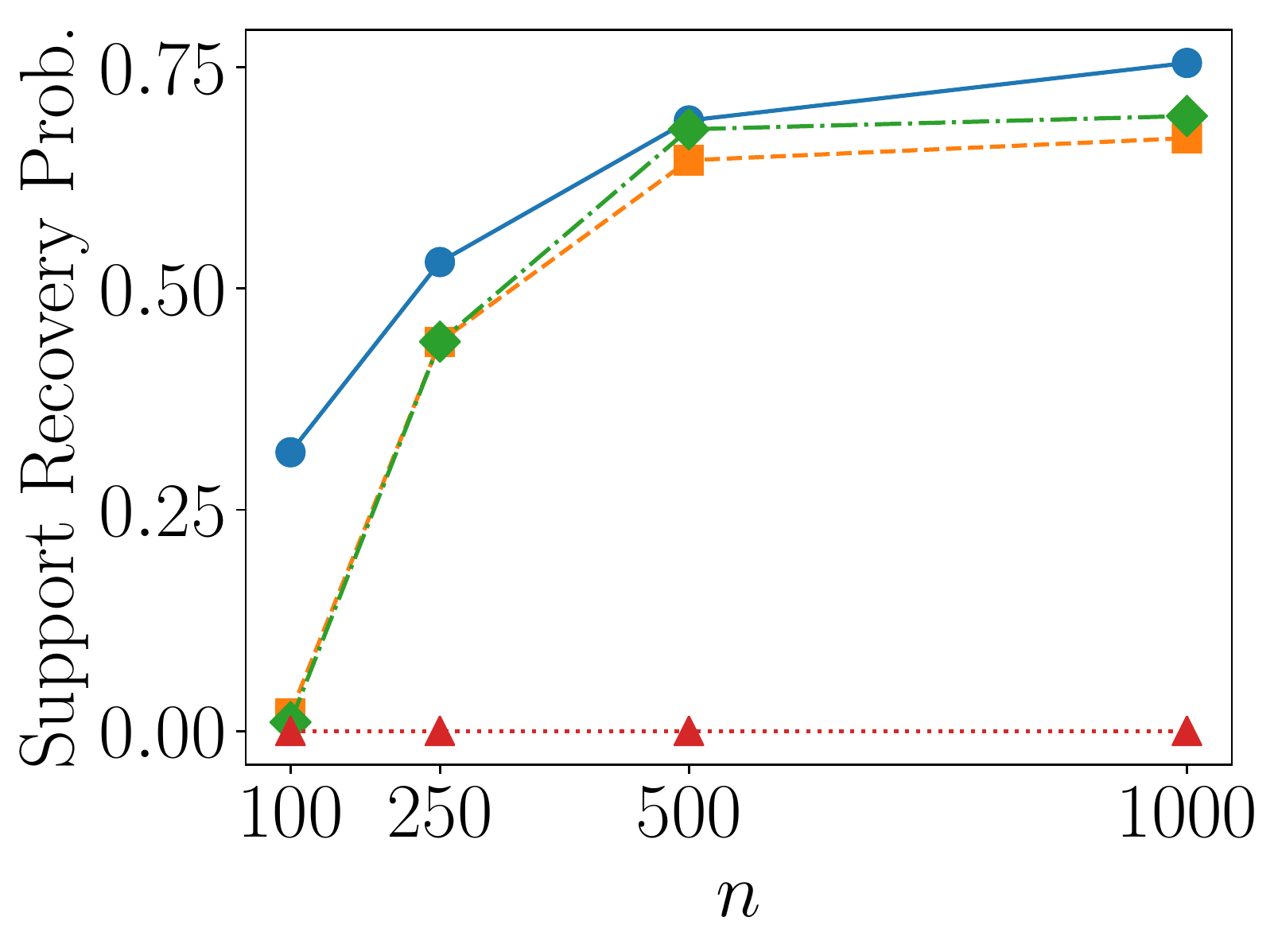}}
		\subfigure[$p=512, k=32$]{\includegraphics[width=0.32\linewidth]{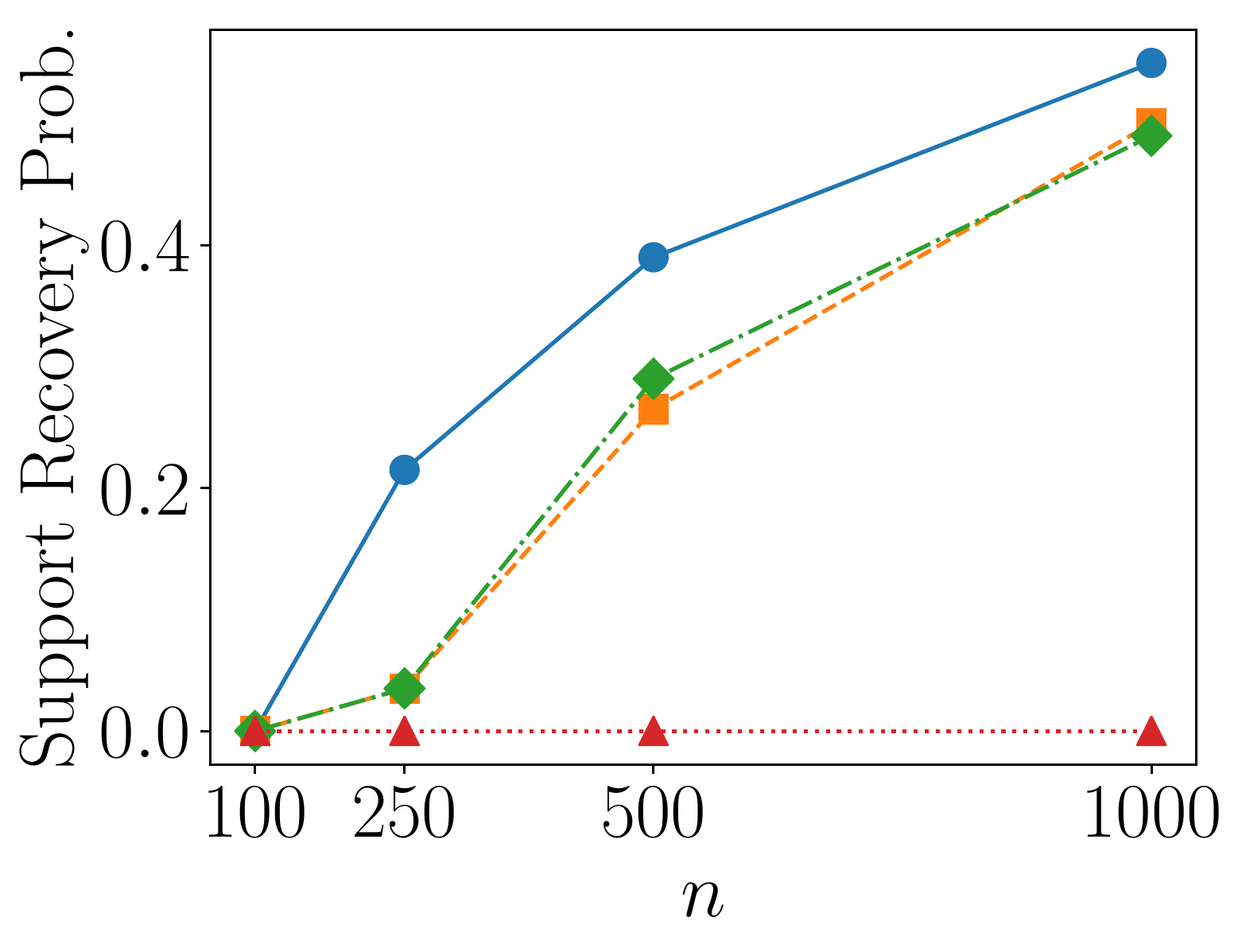}}
		\subfigure[Stationarity]{\includegraphics[width=0.32\linewidth]{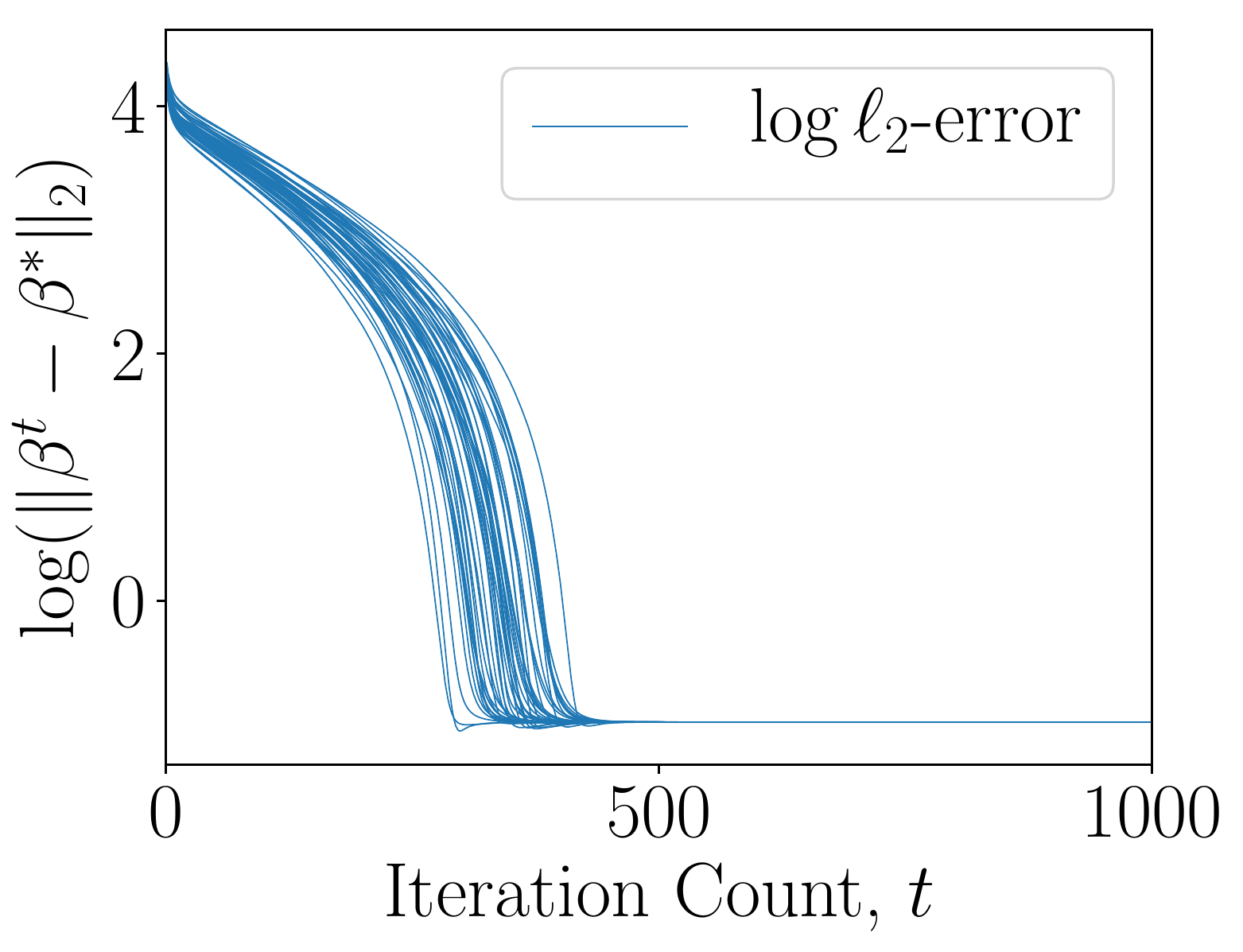}}
		\subfigure[$\log \ell_2$-errors ]{\includegraphics[width=0.32\linewidth]{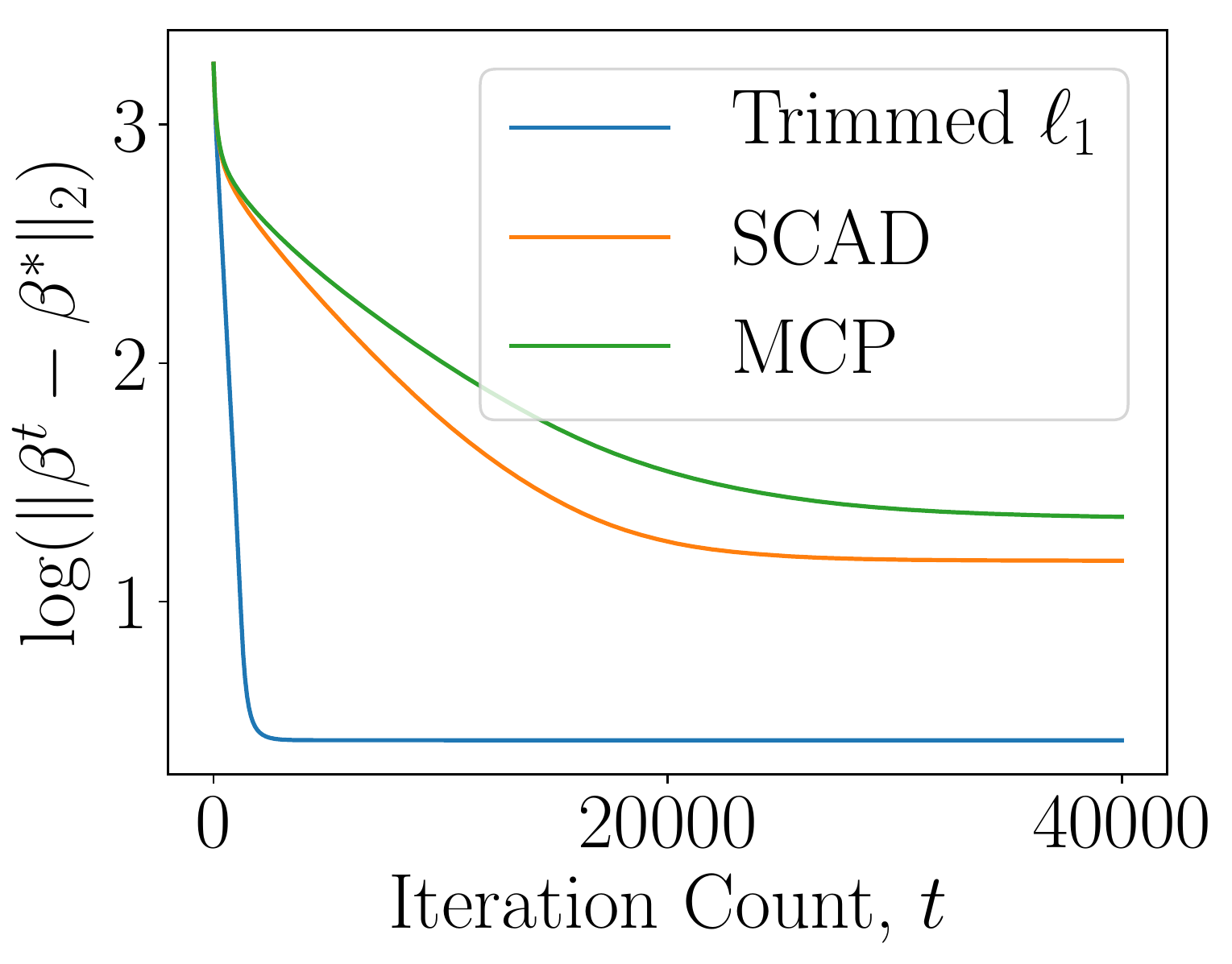}}
		\caption{Results for the non-incoherent case. \textbf{(a)}$\sim$\textbf{(e)}: same as Figure~\ref{Fig:incoherence_reg}.}
		\label{Fig:nonincoherence_reg}
\end{figure}

\begin{figure}[t]
	\centering
\subfigure[Small Regime]{\includegraphics[width=0.32\linewidth]{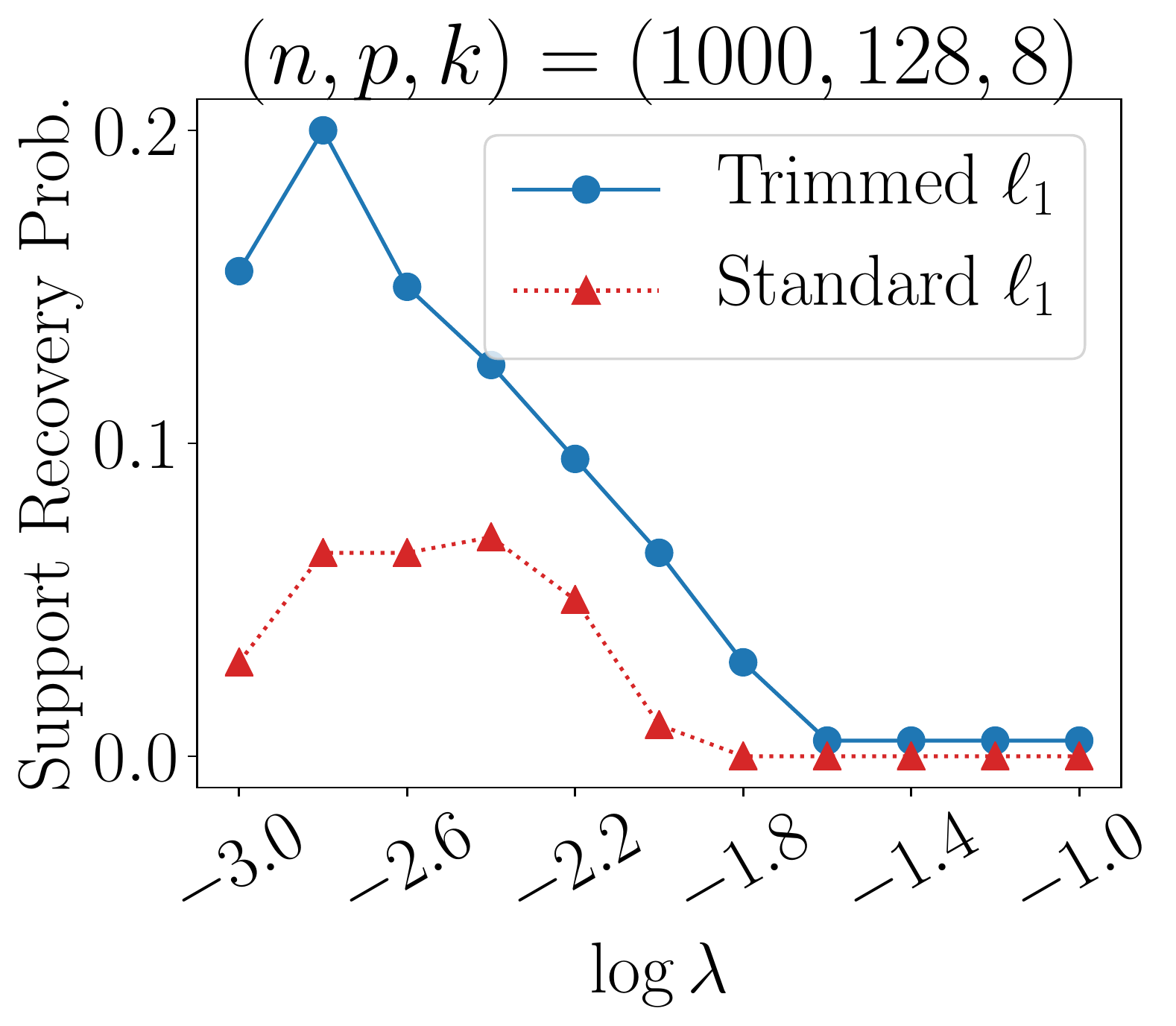}}
\subfigure[Non-incoherent]{\includegraphics[width=0.32\linewidth]{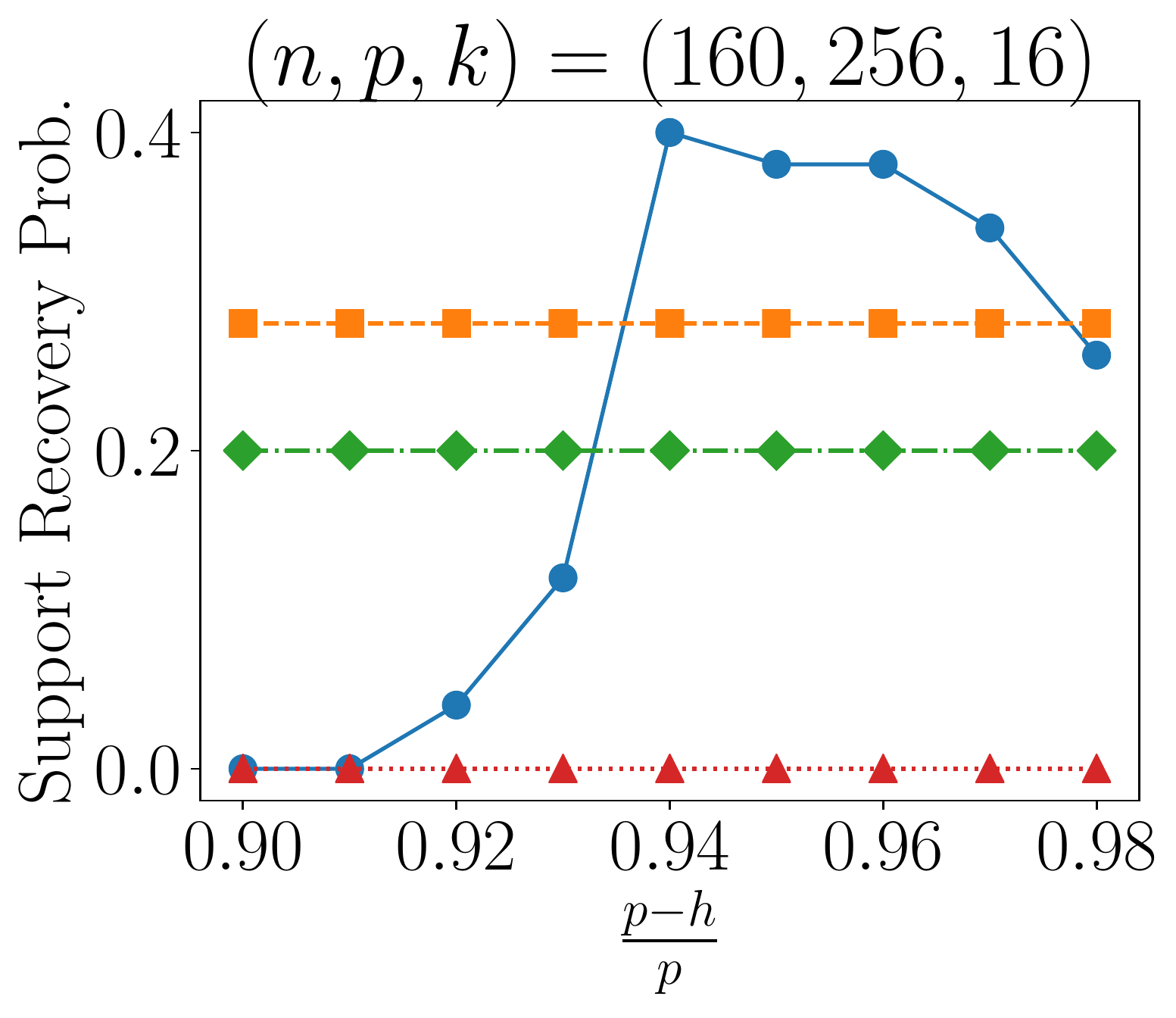}}
\subfigure[Incoherent]{\includegraphics[width=0.32\linewidth]{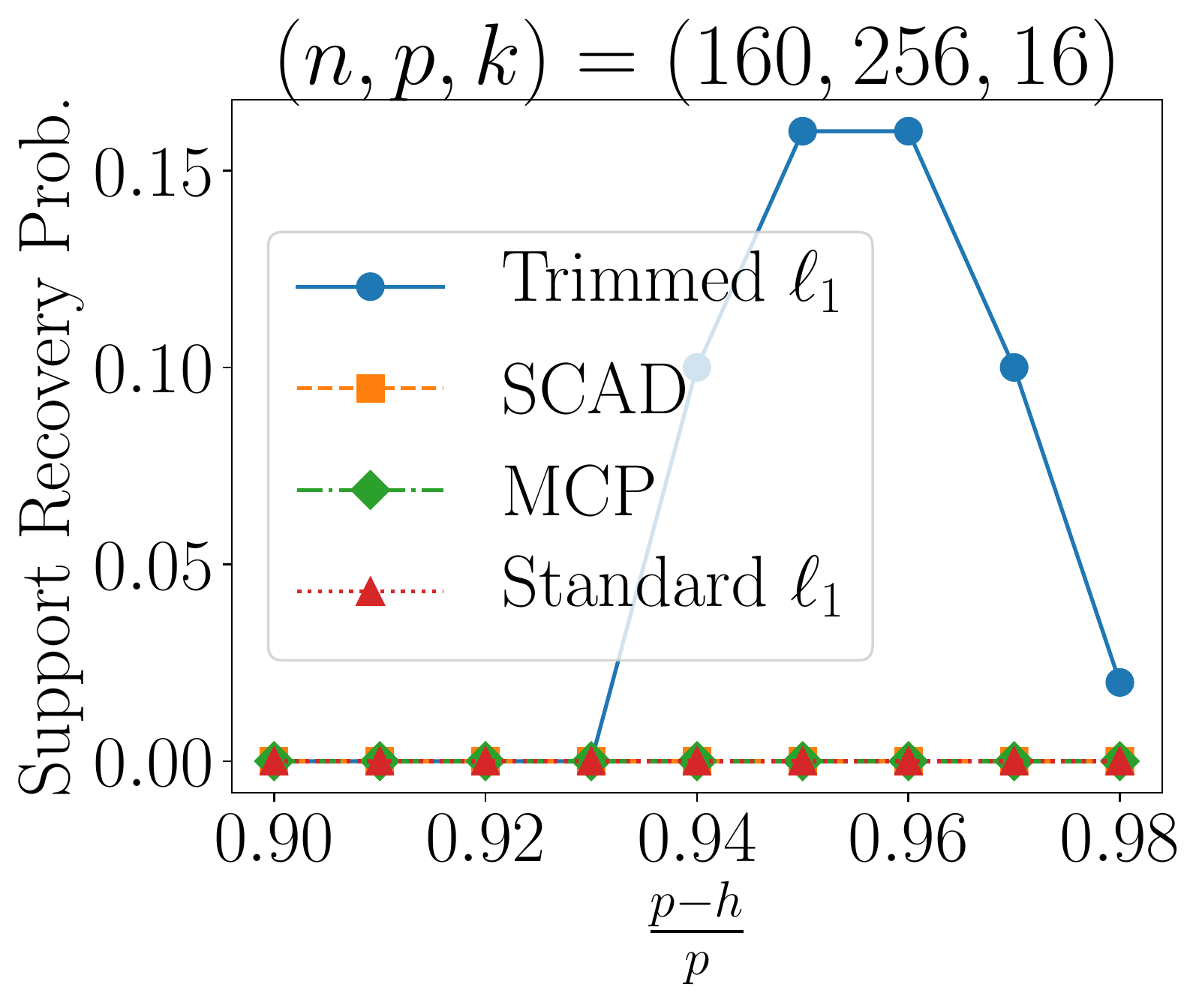}}
\caption{Plots for third and last experiments. \textbf{(a)}: Trimmed Lasso versus standard one in a small regime. We set $h = \lceil 0.05p \rceil$. \textbf{(b)}, \textbf{(c)}: Performance of the trimmed Lasso as the value of $h$ varies.}
\label{Fig:exp_figure}
\end{figure}

We design four experiments. For all experiments except the third one where we investigate the effect of small regularization parameters, we choose the regularization parameters via cross-validation from the set: $\log_{10}\lambda$ $\in$ $\{-3.0, -2.8, \ldots, 1.0\}$. For non-convex penalties requiring additional parameter, we just fix their values (2.5 for MCP and 3.0 for SCAD respectively) since they are not sensitive to results. When we generate feature vectors, we consider two different covariance matrices of normal distribution as introduced in \citet{LW17} to see how regularizers are affected by the incoherence condition.

In our first experiment, we generate i.i.d. observations from $x_i \sim N(0, M_2(\theta))$ where $M_2(\theta) = \theta\mathbf{11}^T+(1-\theta)I_p$ with $\theta$ = 0.7.\footnote{$M_1$ and $M_2$ as defined in \citet{LW17}.} This choice of $M_2(\theta)$ satisfies the incoherence condition~\citet{LW17}. 
We give non-zero values $\beta^*$ with the magnitude sampled from $N(0, 5^2)$, at $k$ random positions, and the response variables are generated by $y_i = x_i^T\beta^* + \epsilon_i$, where $\epsilon_i \sim N(0, 1^2)$. 
In Figure~\ref{Fig:incoherence_reg} (a) $\sim$ (c), we set $(p, k) = (128, 8), (256, 16), (512, 32)$ and increase the sample size $n$. 
The probability of correct support recovery for trimmed Lasso is higher than baselines for all samples in all cases. 
Figure \ref{Fig:incoherence_reg}(d) corroborates Corollary \ref{CorLS}: any local optimum with trimmed $\ell_1$ is close to points with correct support regardless of initialization; 
see comparisons against baselines with same setting in Figure \ref{Fig:incoherence_reg}(e).

In the second experiment, we  replace $M_2(\theta)$ with $M_1(\theta)$, which does not satisfy the incoherence condition.\footnote{$M_1(\theta)$ is a matrix with $1$'s on the diagonal, $\theta$'s in the first $k$ positions of the $(k+1)^{\text{st}}$ row and column, and $0$'s elsewhere.} 
Trimmed still outperforms comparison approaches (Figure~\ref{Fig:nonincoherence_reg}).
Lasso is omitted from Figure \ref{Fig:nonincoherence_reg}(e) as it always fails in this setting.

Our next experiment compares Trimmed Lasso against vanilla Lasso where both $\lambda$ and true non-zeros are small: $\log\lambda \in \{-3.0, -2.8, \ldots, -1.0\}$ and $\beta^* \sim N(0, 0.8^2)$. When the magnitude of $\Tth$ is large, standard Lasso tends to choose a small value of $\lambda$ to reduce the bias of the estimate while Trimmed Lasso gives good performance even for large values of $\lambda$ as long as $h$ is chosen suitably. Figure~\ref{Fig:exp_figure}(a) also confirms the superiority of Trimmed Lasso in a small regime of $\lambda$ with a proper choice of $h$.

In the last experiment, we investigate the effect of choosing the trimming parameter $h$. Figure~\ref{Fig:exp_figure}(b) and (c) show that Trimmed $\ell_1$ outperforms if we set $h = k$ (note $(p-h)/p \approx 0.94$). As $h \downarrow 0$ (when $(p-h)/p = 1$), the performance approaches that of Lasso, as we can see in Corollary \ref{CorLS}.
Additional experiments on sparse Gaussian Graphical Models are provided as supplementary materials.

\paragraph{Input Structure Recovery of Compact Neural Networks.}

We apply the Trimmed $\ell_1$ regularizer to recover input structures of deep models.
We follow~\citet{Oymak18} and consider the regression model $y_i = \bm{1}^T\sigma(\bm{W}^*\bm{x}_i)$ with input dimension $p = 80$, hidden dimension $z = 20$, and ReLU activation $\sigma(\cdot)$. We generate i.i.d. data $\bm{x}_i \sim N(0, I_p)$ and $\bm{W}^* \in \mathbb{R}^{z \times p}$ such that $i$th row has exactly 4 non-zero entries from $N(0, \frac{p}{4z})$ to ensure that $\mathbb{E}[\|\bm{W}^*\bm{x}\|_{\ell_2}^2] = \|\bm{x}\|_{\ell_2}^2$ at only $4(i-1) + 1 \sim 4i$ positions. 
For $\ell_0$ and $\ell_1$ regularizations, we optimize $\bm{W}$ using a projected gradient descent with prior knowledge of $\|\bm{W}^*\|_0$ and $\|\bm{W}^*\|_1$, and we use Algorithm \ref{alg:bcd} for trimmed $\ell_1$ regularization with $h = 4z$ and $(\lambda, \tau) = (0.01, 0.1)$ obtained by cross-validation. 
We set the step size $\eta = 0.1$ for all approaches. 
We consider two sets of simulations with varying sample size $n$ where the initial $\bm{W}_0$ is selected as (a) a small perturbation of $\bm{W}^*$ and (b) at random, as in \citet{Oymak18}. 
Figure~\ref{Fig:neural_net} shows the results where black dots indicate nonzero values in the weight matrix, and we can confirm that Trimmed $\ell_1$ outperforms alternatives in terms of support recovery for both cases.
\begin{figure}[t]
	\centering
	\subfigure[with good initialization (small perturbation from true signal)]
	{\includegraphics[width=\linewidth]{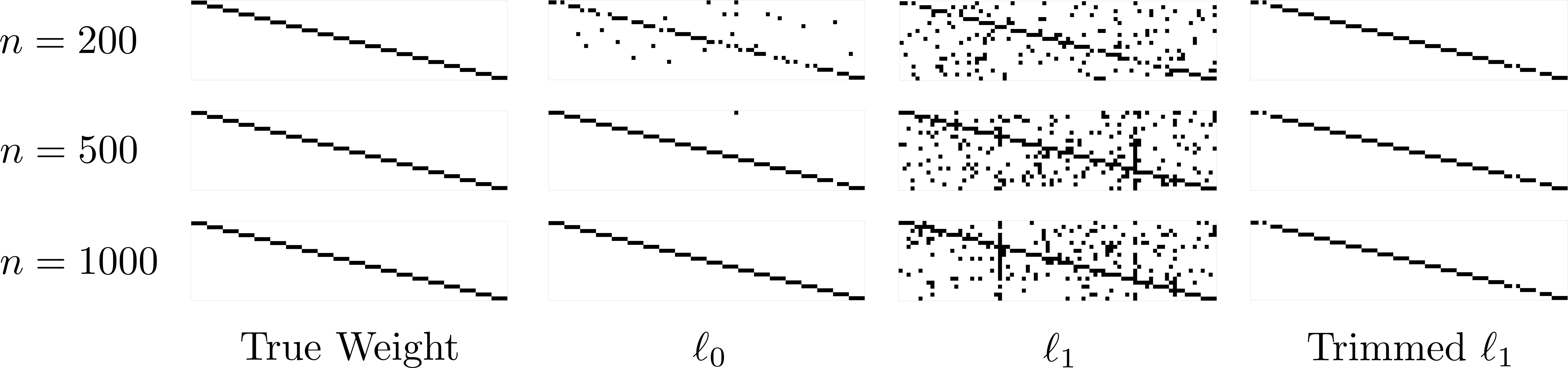}}

	\subfigure[with random initialization]
	{\includegraphics[width=\linewidth]{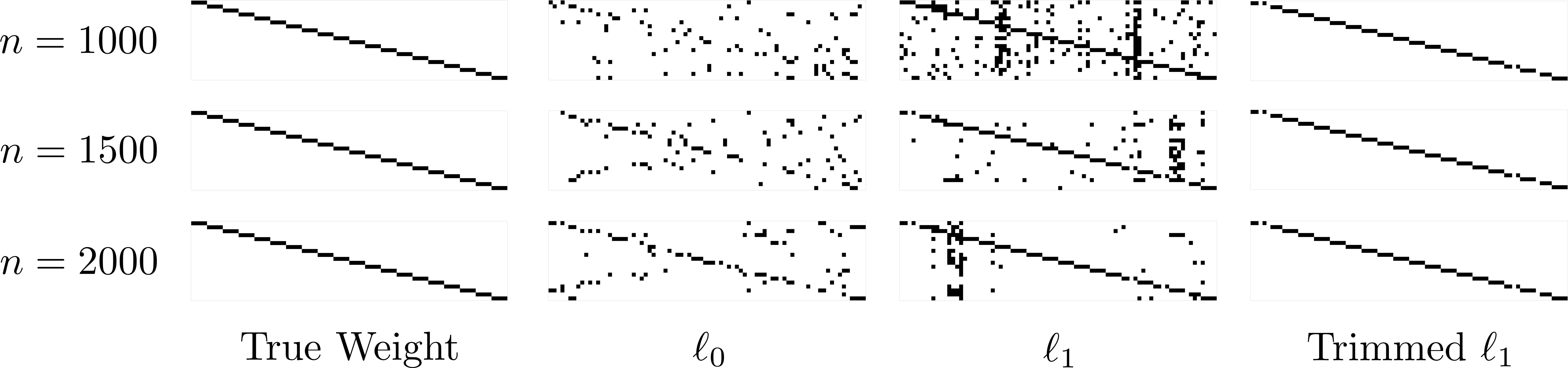}}
	
	\caption{Results for sparsity pattern recovery of deep models.}
	\label{Fig:neural_net}
\end{figure}

\paragraph{Pruning Deep Neural Networks.}

Several recent studies have shown that neural networks are highly over-parameterized, and we can prune the weight parameters/neurons with marginal effect on performance. 
Toward this, we consider trimmed regularization based network pruning. Suppose we have deep neural networks with $L$ hidden layers. Let $n_i$ be the number of neurons in the layer $\bm{h}_i$. The parameters we are interested in are $\mathcal{W} \coloneqq \{\bm{\theta}_l, \bm{b}_l\}_{l=1}^{L+1}$ for $\bm\theta_l \in \mathbb{R}^{n_{l-1} \times n_l}$ and $\bm{b}_l \in \mathbb{R}^{n_l}$ where $\bm{h}_0$ is the input feature $\bm{x}$ and $\bm{h}_{L+1}$ is the output $\bm{y}$. Then, for $l=1,\hdots, L$, $\bm{h}_l = \text{ReLU}(\bm{h}_{l-1} \bm\theta_l + \bm{b}_l)$. Since the edge-wise pruning will not give actual benefit in terms of computation, we prune unnecessary \emph{neurons} through group-sparse encouraging regularizers. Specifically, given the weight parameter $\bm\theta \coloneqq \bm\theta_l$ between $\bm{h}_{l-1}$ and $\bm{h}_l$, we consider the group norm extension of trimmed $\ell_1$:
\[
	\mathcal{R}_l(\bm\theta, \bm{w}) \coloneqq \lambda \sum_{j=1}^{n_{l-1}} w_j \sqrt{\theta_{j,1}^2 + \cdots + \theta_{j,n_l}^2}
\]
with the constraint of $\bm{1}^T\bm{w} = n_{l-1} - h$. 
Moreover, we can naturally make an extension to a convolutional layer with encouraging \emph{activation map sparsity} as follows. If $\bm\theta$ is a weight parameter for 2-dimensional convolutional layer (most generally used) with $\bm\theta \in \mathbb{R}^{C_{\text{out}} \times C_{\text{in}} \times H \times W}$, the trimmed regularization term that induces activation map-wise sparsity is given by 
\[
\mathcal{R}_l(\bm\theta, \bm{w}) \coloneqq \lambda \sum_{j=1}^{C_{\text{out}}} w_j \sqrt{\sum_{m,n,k} \theta_{j,m,n,k}^2}
\]
for all possible indices $(m,n,k)$. 
Finally, we add all penalizing terms to a loss function to have 
\[
\mathcal{L}(\mathcal{W}; \mathcal{D}) + \sum_{l=1}^{L+1} \lambda_l \mathcal{R}_l(\bm{\theta}_l, \bm{w}_l)
\]
where we allow different hyperparameters $\lambda_l$ and $h_l$ for each layer.

In Table \ref{tab:naive_comparison}, we compare trimmed group $\ell_1$ regularization against vanilla group $\ell_1$ on MNIST dataset using LeNet-300-100 architecture \citep{Lecun98gradient-basedlearning}. Here, we set the trimming parameter $h$ to half sparsity level of the original model. For the vanilla group $\ell_1$, we need larger $\lambda$ values to obtain sparser models, for which we pay a significant loss of accuracy. In contrast, we can control the sparsity level using trimming parameters $h$ with little or no drop of accuracy.

\begin{table}[t]
	\caption{Results on MNIST using LeNet-300-100.}
	\label{tab:naive_comparison}
	\begin{center}
		\begin{small}
			\begin{tabular}{lcccr}
				\toprule
				Method & Pruned Model & Error ($\%$) \\
				\midrule
				No Regularization & 784-300-100& \textbf{1.6} \\
				grp $\ell_1$ & 784-241-67 & 1.7 \\
				grp $\ell_{1_{\text{trim}}}$, $h = \text{half of original}$ & \textbf{392-150-50} & \textbf{1.6}\\
				\bottomrule
			\end{tabular}
		\end{small}
	\end{center}
\end{table}

\begin{table}[t]
	\caption{Results on MNIST classification for LeNet 300-100 with Bayesian approaches. $h = \circ$ means that the trimming parameter $h$ is set to the same sparsity level of $\circ$, and $\lambda$ sep. indicates that different $\lambda$ values are employed on each layer.}
	\label{tab:l0_comparison}
	\begin{center}
		\begin{small}
			\begin{tabular}{lccr}
				\toprule
				Method & Pruned Model & Error ($\%$) \\
				\midrule
				$\ell_0$ \citep{Louizos18} & 219-214-100 & \textbf{1.4} \\
				$\ell_0$, $\lambda$ sep. \citep{Louizos18} & 266-88-33 & 1.8 \\
				Bayes grp $\ell_{1_{\text{trim}}}$, $h = \ell_0$ & 219-214-100 & \textbf{1.4} \\
				Bayes grp $\ell_{1_{\text{trim}}}$, $h = \ell_0$, $\lambda$ sep. & 266-88-33 & 1.6 \\
				Bayes grp $\ell_{1_{\text{trim}}}$, $h < \ell_0$, $\lambda$ sep. & \textbf{245-75-25} & 1.7 \\
				\bottomrule
			\end{tabular}
		\end{small}
	\end{center}
\end{table}

\begin{table}[t!]
	\caption{Results on MNIST classification for LeNet-5-Caffe with Bayesian approaches.}
	\label{tab:l0_comparison_lenet}
	\begin{center}
		\begin{small}
			\begin{tabular}{lccr}
				\toprule
				Method & Pruned Model & Error ($\%$) \\
				\midrule
				$\ell_0$ \citep{Louizos18} & 20-25-45-462 & \textbf{0.9} \\
				$\ell_0$, $\lambda$ sep. \citep{Louizos18} & 9-18-65-25 & 1.0 \\
				Bayes grp $\ell_{1_{\text{trim}}}$, $h < \ell_0$ & 20-25-45-150 & \textbf{0.9} \\
				Bayes grp $\ell_{1_{\text{trim}}}$, $h = \ell_0$, $\lambda$ sep. & 9-18-65-25 & 1.0 \\
				Bayes grp $\ell_{1_{\text{trim}}}$, $h < \ell_0$, $\lambda$ sep. & \textbf{8-17-53-19} & 1.0 \\
				\bottomrule
			\end{tabular}
		\end{small}
	\end{center}
\end{table}

Most algorithms for network pruning recently proposed are based on a variational Bayesian approach \cite{dai2018vib, Louizos18}. Motivated by learning sparse structures via smoothed version of $\ell_0$ norm ~\citep{Louizos18}, we propose a Bayesian neural network with trimmed regularization where we regard only $\bm\theta$ as Bayesian. Inspired by a relation between variational dropout and Bayesian neural networks ~\cite{Kingma15}, we specifically choose a fully factorized Gaussian as a variational distribution, $q_{\bm\phi, \bm\alpha}(\theta_{i,j}) = \mathcal{N}(\phi_{i,j}, \alpha_{i,j}\phi_{i,j}^2)$, to approximate the true posterior and leave $\bm{w}$ to directly learn sparsity patterns. Then the problem is cast to maximizing corresponding evidence lower bound (ELBO),
\[
	\mathbb{E}_{q_{\bm\phi, \bm\alpha}}[\mathcal{L}(\mathcal{W};\mathcal{D})] - \mathbb{KL}\big(q_{\bm\phi, \bm\alpha}(\mathcal{W}) \| p(\mathcal{W})\big).
\]
Combined with trimmed $\ell_1$ regularization, the objective is
\begin{equation}
\label{bayesian_loss}
\begin{aligned}
	\mathbb{E}_{q_{\bm\phi, \bm\alpha}(\bm\theta)}\Big[-\mathcal{L}(\mathcal{W}; \mathcal{D}) + \sum\limits_{l=1}^{L+1} \lambda_l \mathcal{R}_l(\bm\theta_l, \bm{w}_l)\Big] \\
	+~ \mathbb{KL}(q_{\bm\phi, \bm\alpha}(\mathcal{W}) \| p(\mathcal{W}))
\end{aligned}
\end{equation}
which can be interpreted as a sum of expected loss and expected trimmed group $\ell_1$ penalizing term. 
\citet{Kingma14} provide the efficient unbiased estimator of stochastic gradients for training $(\bm\phi, \bm\alpha)$, via the reparameterization trick to avoid computing gradient of sampling process. In order to speed up our method, we approximate expected loss term in \eqref{bayesian_loss} using a local reparameterization trick ~\citep{Kingma15} while the standard reparameterization trick is used for the penalty term.

 Trimmed group $\ell_1$ regularized Bayesian neural networks have smaller capacity with less error than other baselines (Table \ref{tab:l0_comparison}). 
 Our model has lower error rate and better sparsity even for convolutional network, LeNet-5-Caffe\footnote{\url{https://github.com/BVLC/caffe/tree/master/examples/mnist}} (Table \ref{tab:l0_comparison_lenet}).\footnote{We only consider methods based on sparsity encouraging regularizers. State-of-the-art VIBNet ~\citep{dai2018vib} exploits the mutual information between each layer.} 
 
 The code is available at \url{https://github.com/abcdxyzpqrst/Trimmed_Penalty}.

\section{Concluding Remarks}
In this work we studied statistical properties of high-dimensional $M$-estimators with the trimmed $\ell_1$ penalty, and demonstrated 
the value of trimmed regularization compared to convex and non-convex alternatives. 
We developed a provably convergent algorithm for the trimmed problem,  
based on specific problem structure rather than generic DC structure, with promising numerical results. 
A detailed comparison to DC based approaches is left to future work. Going beyond $M$-estimation, we showed that trimmed regularization can be beneficial for two deep learning tasks: input structure recovery and network pruning.
As future work we plan to study trimming of general decomposable regularizers, including $\ell_1 / \ell_q$  norms, and further investigate the use of trimmed regularization in deep models.

{\small \paragraph{Acknowledgement.}
This work was supported by  the National Research Foundation of Korea (NRF) grant (NRF-2018R1A5A1059921), Institute of Information \& Communications Technology Planning \& Evaluation (IITP) grant (No.2019-0-01371) funded by the Korea government (MSIT) and Samsung Research Funding \& Incubation Center via SRFC-IT1702-15.}

\newpage
\bibliography{TrimReg,sml,Elem_LinearModel,opt,neuralnet}

\newpage

%
\section*{Supplementary Materials}

\section{Sparse graphical models}

We derive a corollary for the trimmed Graphical lasso:
\begin{align}\label{EqnGlasso}
		\minimize_{\Th \in \mathcal{S}^{p}_{++}} \ \textrm{trace}\big(\Sig \Th \big) -\log\det \big(\Th\big) + \lam \R(\Th_{\textrm{off}};h)  .	
	\end{align}
Following the strategy of \cite{LW17}, we assume that the sample size scales with the row sparsity $d$ of true inverse covariance $\TTh = (Cov(X))^{-1}$, which is a milder condition than other works ($n$ scaling with $k$, the number of non zero entries of $\TTh$):
\begin{corollary}\label{CorGlasso}
	Consider the program \eqref{EqnGlasso} where the $x_i$'s are drawn from a sub-Gaussian and sample size $n > c_0 d^2 \log p$ with the selection of  	
	\vspace{-.2cm}\begin{enumerate}[leftmargin=0.25cm, itemindent=0.45cm,label=(\alph*)]
	    \item $\lam \geq \cLStwo \sqrt{\frac{\log p}{n}}$ for some constant $\cLStwo$ depending only on $\TTh$
 		\item $h$ satisfying: for any selection of $\Nonreg \subseteq \{1,2,\hdots,p\} \times \{1,2,\hdots,p\} \text{ s.t. } |\Nonreg| = h$,
	 		\begin{align}\label{EqnGlassoIncoh}
				& \matnormBig{\big(\TTh \otimes \TTh\big)_{\USupNonreg \USupNonreg}}{\infty}  \leq \cLSone,  \\
				& \max\left\{\matnorm{\GG_{\USupNonregC \USupNonregC}}{\infty},\matnorm{(\GG_{\USupNonreg \USupNonreg})^{-1}}{\infty}\right\} \leq \cLSsix \quad \text{ and } \nonumber\\
				& \matnormBig{\big({\TTh}^{-1} \otimes {\TTh}^{-1}\big)_{\USupNonregC\USupNonreg} \Big(\big({\TTh}^{-1} \otimes {\TTh}^{-1}\big)_{\USupNonreg\USupNonreg}\Big)^{-1}}{\infty} \leq \eta \nonumber.
			\end{align}
 	\end{enumerate}
 	Further suppose that $\frac{1}{2}\TTh_{\min}$ is lower bounded by $\cLSthree \sqrt{\frac{\log p}{n}} + 2 \lam \cLSone$ for some constant $\cLSthree$.
 	Then with high probability at least $1-\cLSfour \exp (-\cLSfive \log p)$, any local minimum $\LTh$ of \eqref{EqnLS} has the following property: 
 	\begin{enumerate}[leftmargin=0.25cm, itemindent=0.45cm,label=(\alph*)]
 		\item For every pair $j_1 \in \Supp, j_2 \in \SuppC$, $|\LTh_{j_1}| > |\LTh_{j_2}|$,
 		\item If $h < k$, all $j \in \SuppC$ are successfully estimated as zero and we have
 			\begin{align}
 				\|\LTh -\TTh\|_\infty \leq \cLSthree \sqrt{\frac{\log p}{n}} + 2\lam \cLSone
 			\end{align}
 		\item If $h \geq k$, at least the smallest $p-h$ entries in $\SuppC$ have exactly zero and we have
 			\begin{align}
				  \|\LTh -\TTh\|_\infty \leq \cLSthree \sqrt{\frac{\log p}{n}}.
 			\end{align}
  	\end{enumerate}
\end{corollary}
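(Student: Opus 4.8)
The plan is to instantiate Theorem~\ref{ThmSupp} for the Gaussian negative log-likelihood $\L(\Th) = \textrm{trace}(\Sig\Th) - \log\det(\Th)$ and then bound the three generic objects $\nabla\L(\TTh)$, $\Gz$, and $\Q$ that enter its hypotheses and conclusions. First I would record the derivatives $\nabla\L(\Th) = \Sig - \Th^{-1}$, so that $\nabla\L(\TTh) = \Sig - (\TTh)^{-1}$ is the deviation of the sample from the true covariance, and $\nabla^2\L(\Th) = \Th^{-1}\otimes\Th^{-1}$, so that the population Hessian is $\Gamma^* := (\TTh)^{-1}\otimes(\TTh)^{-1}$, the Kronecker square of the true covariance, whose inverse $(\TTh\otimes\TTh)$ has the restricted block controlled by $\cLSone$ in~\eqref{EqnGlassoIncoh}. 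Condition~\ref{Con:diff} is immediate since $-\log\det$ is smooth and convex on $\mathcal{S}^p_{++}$, and~\ref{Con:rsc} holds on a neighborhood of $\TTh$ from the eigenvalue bounds in~\eqref{EqnGlassoIncoh}, which keep $\nabla^2\L$ uniformly bounded above and below along the restricted directions. The one structural change relative to Corollary~\ref{CorLS} is that only off-diagonal entries are penalized; this is handled by leaving the diagonal block unregularized in the PDW construction~\eqref{EqnPDW}, exactly as for the graphical Lasso in~\citet{RWRY11}.

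Next I would supply the single probabilistic ingredient: for sub-Gaussian $x_i$, standard concentration for sample covariances gives $\|\Sig - (\TTh)^{-1}\|_\infty \leq \cLStwo\sqrt{\log p/n}$ on an event of probability at least $1 - \cLSfour\exp(-\cLSfive\log p)$. Because $\nabla\L(\TTh) = \Sig - (\TTh)^{-1}$ does not depend on the choice of $\Nonreg$, this one event simultaneously verifies $\lam \geq 2\|\nabla\L(\TTh)\|_\infty$ under $\lam \geq \cLStwo\sqrt{\log p/n}$ and supplies the $\sqrt{\log p/n}$ factor in all the error bounds. Combined with the incoherence $\matnorm{\Gamma^*_{\USupNonregC\USupNonreg}(\Gamma^*_{\USupNonreg\USupNonreg})^{-1}}{\infty}\leq\eta$ from~\eqref{EqnGlassoIncoh} and the choice of $\lam$, it yields the strict dual feasibility hypothesis~(a) of Theorem~\ref{ThmSupp} with a fixed slack $\delta$, uniformly over every $\Nonreg$ with $|\Nonreg|=h$ since the incoherence is assumed for all such $\Nonreg$. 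The assumed lower bound $\frac12\TTh_{\min} \geq \cLSthree\sqrt{\log p/n} + 2\lam\cLSone$ is then exactly hypothesis~(b), once the restricted inverse-Hessian norm appearing there is bounded by $2\cLSone$ using the population bound $\cLSone$ and the operator-norm control $\cLSsix$ in~\eqref{EqnGlassoIncoh}.

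The step I expect to be the main obstacle, and the reason the sample size must scale as $n > c_0 d^2\log p$ rather than the order-$(k+h)\log p$ scaling of the linear case, is controlling the nonconstant Hessian. Unlike least squares where $\Q = \GG$ exactly, here $\Q = \int_0^1 \nabla^2\L\big(\TTh + t(\Gth - \TTh)\big)\,dt$ differs from $\Gamma^*$, and the restricted estimate $\Gth$ itself enters the integrand. Following the remainder analysis of~\citet{RWRY11}, I would expand $\nabla\L(\Gth) - \nabla\L(\TTh) - \Gamma^*[\Gth - \TTh]$ as a second-order Taylor remainder of the matrix inverse and bound its entrywise $\ell_\infty$ norm by a constant multiple of $d\,\|\Gth - \TTh\|_\infty^2$. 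Since $\|\Gth - \TTh\|_\infty \leq c\sqrt{\log p/n}$ for a constant $c$, this remainder is of order $d\log p/n$, which is dominated by $\sqrt{\log p/n}$ precisely when $n > c_0 d^2\log p$. This both keeps $\Gth$ inside $\mathcal{S}^p_{++}$ and lets the restricted inverse-Hessian norms $\matnorm{(\Qs)^{-1}}{\infty}$ and $\matnorm{(\TQ)^{-1}}{\infty}$ inherit the bound $2\cLSone$ from~\eqref{EqnGlassoIncoh}, the factor $2$ absorbing the Hessian deviation.

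With these pieces assembled I would read off the three conclusions of Theorem~\ref{ThmSupp}. Part~(1) gives the ordering $|\LTh_{j_1}| > |\LTh_{j_2}|$ for $j_1\in\Supp$, $j_2\in\SuppC$; part~(2) gives, when $h<k$, exact zeros on $\SuppC$ and $\|\LTh - \TTh\|_\infty \leq \cLSthree\sqrt{\log p/n} + 2\lam\cLSone$, where the second term is the graphical-model analog of $\lam\matnorm{(\TQ)^{-1}}{\infty}$; and part~(3) gives, when $h\geq k$, that at least the smallest $p-h$ entries of $\SuppC$ vanish and that the $\lam$ term drops, leaving $\|\LTh - \TTh\|_\infty \leq \cLSthree\sqrt{\log p/n}$. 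All statements hold on the single sub-Gaussian concentration event, giving the stated probability $1 - \cLSfour\exp(-\cLSfive\log p)$.
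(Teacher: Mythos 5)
Your proposal is correct and follows essentially the same route as the paper: both instantiate Theorem~\ref{ThmSupp} via the extension of the argument in Corollary~4 of \citet{LW17} (itself based on \citet{RWRY11}), using the entrywise concentration of $\widehat{\Sigma}$ uniformly over all choices of $\Nonreg$, the incoherence condition for strict dual feasibility, and the Taylor/Neumann-series remainder bound on the matrix inverse to control $\Q - (\TTh)^{-1}\otimes(\TTh)^{-1}$, which is exactly where the $n \gtrsim d^2\log p$ scaling enters. Your write-up is in fact more explicit than the paper's sketch about why the factor $2$ appears in front of $\lam\cLSone$ and why the sample-size requirement differs from the linear-regression case.
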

Note that condition \eqref{EqnGlassoIncoh} is the incoherence condition studied in \cite{RWRY11}, and the same remarks as those for sparse linear models (see Section~\ref{sec:slr}) can be made. 

\section{Proofs}
\subsection{Proof of Theorem \ref{ThmSupp}}
We extend the standard PDW technique \cite{Wainwright2006new,YRAL13,LW17} for the trimmed regularizers. For any {\bf fixed} $\Nonreg$, we construct a primal and dual witness pair with the strict dual feasibility. Specifically, given the fixed $\Nonreg$, consider the following program:
	\begin{align}\label{EqnThmSup1}
		\minimize_{\th \in \Omega} \ \ &  \L(\th;\Data) + \lam \sum_{j \in \NonregC} |\theta_j| .	
	\end{align}
	Note that the program \eqref{EqnThmSup1} is convex (under \ref{Con:diff}) where the regularizer is only effective over entries in (fixed) $\NonregC$. We construct the primal and dual pair $(\Gth,\Gz)$ by the following restricted program
	\begin{align}\label{EqnRest}
		\Gth \in \argmin_{\th \in \reals^{\USupNonreg} : \ \th \in \Omega} \ \ &  \L(\th) + \lam \R(\th;h) 
	\end{align}
	 and \eqref{EqnPDW}. The following lemma can guarantee under the strict dual feasibility that any solution of \eqref{EqnThmSup1} has the same sparsity structure on $\NonregC$ with $\Gth$. Moreover, since the restricted program \eqref{EqnPDW} is strictly convex as shown in the lemma below, we can conclude that $\Gth$ is the unique minimum point of the restricted program \eqref{EqnThmSup1} given $\Nonreg$.     

	\begin{lemma}\label{LemUnique}
		Suppose that there exists a primal optimal solution $\Gth$ for \eqref{EqnThmSup1} with associated sub-gradient (or dual) $\Gz$ such that $\|\Gz_{\USupNonregC}\|_\infty < 1$. Then any optimal solution $\Lth$ of \eqref{EqnThmSup1} will satisfy $\Lth_j = 0$ for all $j \in \USupNonregC$.  	
	\end{lemma}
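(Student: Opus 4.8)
The plan is to run the classical Lasso uniqueness argument, specialized to the restricted penalty of \eqref{EqnThmSup1}. Write $g(\th) := \lam\sum_{j\in\NonregC}|\theta_j|$ and $f := \L + g$, so that \eqref{EqnThmSup1} minimizes the convex function $f$ (convexity of $\L$ is \ref{Con:diff}, and $g$ is convex). The zero-subgradient condition \eqref{EqnPDW} reads $\lam\Gz = -\nabla\L(\Gth)$ with $\Gz_{\Nonreg}=0$ and $\Gz_j \in \partial|\Gth_j|$ for $j\in\NonregC$; equivalently $\lam\Gz \in \partial g(\Gth)$, which is precisely the first-order optimality condition $0 \in \partial f(\Gth)$. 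My first observation is that strict dual feasibility already pins the relevant part of the support of $\Gth$: for each $j \in \USupNonregC \subseteq \NonregC$ we have $\Gz_j \in \partial|\Gth_j|$, and if $\Gth_j \neq 0$ then $|\Gz_j| = 1$, contradicting $\|\Gz_{\USupNonregC}\|_\infty < 1$. Hence $\Gth_{\USupNonregC} = 0$.

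Next I would exploit that an arbitrary optimal $\Lth$ attains the same optimal value, $f(\Lth) = f(\Gth)$. Combining the gradient inequality for the convex $\L$ with the subgradient inequality for $g$ at $\Gth$ gives $f(\Lth) \geq \L(\Gth) + \langle \nabla\L(\Gth),\, \Lth - \Gth\rangle + g(\Gth) + \lam\langle \Gz,\, \Lth-\Gth\rangle = f(\Gth)$, where the two inner-product terms cancel because $\nabla\L(\Gth) + \lam\Gz = 0$. Since $\Lth$ is optimal, this chain must hold with equality throughout, which forces the subgradient inequality for $g$ to be tight: $g(\Lth) = g(\Gth) + \lam\langle\Gz,\, \Lth - \Gth\rangle$.

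I would then decompose this tightness coordinatewise. Since $\Gz_{\Nonreg}=0$, only indices in $\NonregC$ contribute, and dividing by $\lam$ yields $\sum_{j\in\NonregC}\big(|\Lth_j| - |\Gth_j| - \Gz_j(\Lth_j - \Gth_j)\big) = 0$. Each summand is nonnegative because $\Gz_j \in \partial|\Gth_j|$ implies $|\Lth_j| \geq |\Gth_j| + \Gz_j(\Lth_j - \Gth_j)$, so every summand must vanish. For $j \in \USupNonregC$ we have $\Gth_j = 0$ from the first step, so this reads $|\Lth_j| = \Gz_j\Lth_j \leq |\Gz_j|\,|\Lth_j| \leq \|\Gz_{\USupNonregC}\|_\infty\,|\Lth_j|$; strict feasibility $\|\Gz_{\USupNonregC}\|_\infty < 1$ then forces $\Lth_j = 0$, which is exactly the claim.

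The argument is in essence a convexity/complementary-slackness computation, so I do not expect a deep obstacle; the step requiring the most care is propagating the \emph{global} equality $f(\Lth)=f(\Gth)$ down to a \emph{per-coordinate} identity, which hinges on the nonnegativity of each summand in the convexity gap together with $\Gz_{\Nonreg}=0$ (so that only $\NonregC$, and in particular $\USupNonregC$, enters). If one additionally wants the stronger uniqueness statement alluded to in the surrounding text (that $\Gth$ is the unique minimizer of the restricted program \eqref{EqnRest}), the natural route is to invoke restricted strong convexity \ref{Con:rsc} on the index set $\USupNonreg$ to upgrade ``all optima share these zeros'' into strict convexity on the restricted coordinates, but that is not needed for the present conclusion.
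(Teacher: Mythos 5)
Your proof is correct and follows essentially the same route as the paper's: the paper invokes "complementary slackness" to get $\sum_{j\in \NonregC}|\Lth_j| = \langle \Gz_{\NonregC}, \Lth_{\NonregC}\rangle$ and then uses $\|\Gz_{\USupNonregC}\|_\infty < 1$ to force $\Lth_{\USupNonregC}=0$, which is exactly the coordinatewise tightness identity you derive from the equality $f(\Lth)=f(\Gth)$ and the nonnegativity of each convexity-gap summand. Your write-up simply fills in the details the paper defers to prior PDW work, so there is no substantive difference in approach.
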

	\begin{proof}
		The lemma can be directly achieved by the basic property of convex optimization problem, as developed in existing works using PDW \cite{Wainwright2006new,YRAL13}. Note that even though the original problem with the trimmed regularizer is not convex, \eqref{EqnThmSup1} given $\Nonreg$ is convex. Therefore, by complementary slackness, we have $\sum_{j\in \NonregC} |\Lth_j| = \langle \Gz_{\NonregC}, \Lth_{\NonregC} \rangle$. Therefore, any optimal solution of \eqref{EqnThmSup1} will satisfy $\Lth_j = 0$ for all $j \in \USupNonregC$ since the associated (absolute) sub-gradient is strictly smaller than 1 by the assumption in the statement.   	
	\end{proof}
	
	\begin{lemma}[Section A.2 of \citep{LW17}]\label{LemInv}
		Under \ref{Con:rsc}, the loss function $\L(\th)$ is strictly convex on $\th \in \reals^{\USupNonreg}$ and hence $\big( \nabla^2 \L(\th)\big)_{\USupNonreg\USupNonreg}$ is invertible if $n \geq \frac{2\RSCtolOne}{\RSCcon} (k+h)\log p$.
	\end{lemma}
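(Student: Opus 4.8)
The plan is to turn the direction-restricted curvature in \ref{Con:rsc} into an \emph{unconditional} strong convexity estimate on the coordinate subspace $\reals^{\USupNonreg}$, and then read off positive definiteness of the corresponding Hessian block.

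First I would exploit the cardinality of the active set. Since $\Supp \subseteq \USupNonreg$, every $\th \in \reals^{\USupNonreg}$ has error $\errt = \th - \Tth$ supported on $\USupNonreg$, and $|\USupNonreg| = |\Supp \cup \Nonreg| \leq |\Supp| + |\Nonreg| = k + h$. Cauchy--Schwarz then gives $\|\errt\|_1 \leq \sqrt{|\USupNonreg|}\,\|\errt\|_2 \leq \sqrt{k+h}\,\|\errt\|_2$, so that $\|\errt\|_1^2 \leq (k+h)\|\errt\|_2^2$ for every such $\errt$.

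Next I would substitute this bound into \ref{Con:rsc} to obtain, for every $\errt$ supported on $\USupNonreg$,
\[
\Biginner{\nabla \L(\Tth + \errt) - \nabla\L(\Tth)}{\errt} \geq \left(\RSCcon - \RSCtolOne\frac{(k+h)\log p}{n}\right)\|\errt\|_2^2 .
\]
Under the sample-size hypothesis $n \geq \frac{2\RSCtolOne}{\RSCcon}(k+h)\log p$ the subtracted term is at most $\RSCcon/2$, so the effective curvature is at least $\RSCcon/2 > 0$. This is exactly $\tfrac{\RSCcon}{2}$-strong monotonicity of $\nabla\L$ restricted to $\reals^{\USupNonreg}$, which forces strict convexity of $\L$ on that subspace.

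Finally I would convert this first-order estimate into invertibility of the Hessian block. The cleanest route is to use the two-point form of the curvature bound (equivalently, re-centering \ref{Con:rsc} at an arbitrary base point, as in Section~A.2 of \citep{LW17}): restricting both arguments to $\reals^{\USupNonreg}$ and specializing to the perturbation $\th + s\bm v$ of a point $\th \in \reals^{\USupNonreg}$ by a unit vector $\bm v$ supported on $\USupNonreg$, dividing by $s^2$ and letting $s \downarrow 0$ yields $\bm v^\top (\nabla^2\L(\th))_{\USupNonreg\USupNonreg}\,\bm v \geq \RSCcon/2$ for every such $\bm v$. Hence $(\nabla^2\L(\th))_{\USupNonreg\USupNonreg} \succeq \tfrac{\RSCcon}{2} I$ is positive definite, and in particular the averaged matrix $\Q = \int_0^1 \nabla^2\L\big(\Tth + t(\Gth - \Tth)\big)\,dt$ inherits $\Q_{\USupNonreg\USupNonreg} \succeq \tfrac{\RSCcon}{2} I$ and is invertible. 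I expect this translation from a difference-of-gradients inequality to a uniform bound on the Hessian block to be the only delicate point: it relies on the $C^2$-smoothness of $\L$ and on the perturbed errors staying admissible in $\errtSet$ so that \ref{Con:rsc} continues to apply in the limit, whereas the cardinality and sample-size steps are routine.
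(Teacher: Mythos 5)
Your proof is correct and is essentially the standard argument that the paper itself does not write out but simply imports by citing Section~A.2 of \citep{LW17}: restrict to the $(k+h)$-dimensional coordinate subspace so that $\|\errt\|_1^2 \le (k+h)\|\errt\|_2^2$, absorb the tolerance term of \ref{Con:rsc} using the sample-size condition to retain curvature $\RSCcon/2$, and pass from the resulting strong monotonicity to positive definiteness of the Hessian block. The caveat you flag at the end is the right one: \ref{Con:rsc} as written is centered at $\Tth$, so one does need its two-point (arbitrary base point) form, which is how the cited reference states RSC, in order to control $\nabla^2\L$ at the intermediate points $\Tth + t(\Gth-\Tth)$ that enter $\Q$.
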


Now from the definition of $\Q$, we have 
	\begin{align}\label{EqnThm1Q}
		\Q (\Gth - \Tth) = \nabla\L(\Gth) - \nabla\L(\Tth)	
	\end{align}
	where $\Q$ is decomposed as 
	$\begin{bmatrix}
    	\Q_{\USupNonreg\USupNonreg} & \Q_{\USupNonreg\USupNonregC} \\
    	\Q_{\USupNonregC\USupNonreg} & \Q_{\USupNonregC\USupNonregC} \\
    \end{bmatrix}$.
    Then by the invertibility of $\big( \nabla^2 \L(\th)\big)_{\USupNonreg\USupNonreg}$ in Lemma \ref{LemInv} and the zero sub-gradient condition in \eqref{EqnPDW} we have
    \begin{align}
    	\Gth_{\USupNonreg} - \Tth_{\USupNonreg} = \Big(\Q_{\USupNonreg\USupNonreg}\Big)^{-1} \Big( - \nabla\L(\Tth)_{\USupNonreg} - \lam \Gz_{\USupNonreg} \Big).
    \end{align}
 
	Since both $\Gth_{\USupNonregC}$ and $\Tth_{\USupNonregC}$ are zero vectors, we obtain  
	\begin{align}\label{EqnThmSup2}
		\| \Gth - \Tth \|_\infty  \ & = \Big\| \Big(\Q_{\USupNonreg\USupNonreg}\Big)^{-1} \Big( - \nabla\L(\Tth)_{\USupNonreg} - \lam \Gz_{\USupNonreg} \Big)	\Big\|_\infty \nonumber\\
		& \leq \Big\| \Big(\Q_{\USupNonreg\USupNonreg}\Big)^{-1} \nabla\L(\Tth)_{\USupNonreg} \Big\|_\infty + \lam \matnormbig{\big(\Qs\big)^{-1}}{\infty}. 
	\end{align}
	Therefore, under the assumption on $\Tth_{\min}$ in the statement, the selection of $\Nonreg$ in which there exists some $(j,j')$ s.t. $j\in \Supp$, $j \in \NonregC$, $j' \in \SuppC$ and $j' \in \Nonreg$, yields contradictory solution with \eqref{EqnTrimmedReg2}. Under the strict dual feasibility condition for this specific choice of $\Nonreg$ (along with Lemma \ref{LemInv}) can guarantee that there is no local minimum for that choice of $\Nonreg$.   
	 Hence, \eqref{EqnThmSup2} can guarantee that for every pair $(j_1,j_2)$ such that $j_1 \in \Supp$ and $j_2 \notin \Supp$, we have $|\Lth_{j_1}| > |\Lth_{j_2}|$ (since $\Lth = \Gth$). Note that for any \emph{valid} selection of $\Nonreg$, this statement holds. This immediately implies that any local minimum of \eqref{EqnTrimmedReg2} satisfies this property as well, as in the statement. 
	
	Finally turning to the bound when $h \geq k$, we have $\USupNonreg = \Nonreg$ since all entries in $\Supp$ are not penalized as shown above. In this case, $\Gz_{\USupNonreg}$ becomes zero vector (since $\DSupNonreg$ is empty in the construction of $\Gz$), and the bound in \eqref{EqnThmSup2} will be tighter as 
	\begin{align}\label{EqnThmSup2}
		\| \Gth - \Tth \|_\infty  \ & = \Big\| \Big(\Q_{\USupNonreg\USupNonreg}\Big)^{-1} \Big( - \nabla\L(\Tth)_{\USupNonreg} - \lam \Gz_{\USupNonreg} \Big)	\Big\|_\infty \nonumber\\
		& \leq \Big\| \Big(\Q_{\USupNonreg\USupNonreg}\Big)^{-1} \nabla\L(\Tth)_{\USupNonreg} \Big\|_\infty ,
	\end{align}
	as claimed.

\subsection{Proof of Theorem \ref{ThmL2}}
Here we adopt the strategy developed in \citet{YLA2018} for analyzing local optima of trimmed loss function. Since our loss function $\L$ is convex, the story derived in this subsection can also be applied to results of \citet{NRWY12}. However, in order to simplify the procedure, we will not utilize the convexity of $\L$ and instead place the side constraint $\|\th \|_1 \leq R$ and some additional assumptions (see \cite{Loh13} for details). As in \citet{YLA2018}, we introduce the the shorthand to denote local optimal error vector: $\Lerrt := \Lth - \Tth$ given an \emph{arbitrary} local minimum $(\Lth,\Lw)$ of \eqref{EqnTrimmedReg2}. We additionally define $H$ to denote the set of indices not penalized by $\Lw$ (that is, $\Lw_j = 0$ for $j \in H$, $\Lw_j = 1$ for $j \in H^c$ and $|H| = h$). Utilizing the fact that $(\Lth,\Lw)$ is a local minimum of \eqref{EqnTrimmedReg2}, we have an inequality 
\begin{align*}
	\biginner{\nabla_{\th} \L\big(\Tth + \Lerrt \big)}{\Lth - \th}  \leq - \biginner{  \partial \lambda \R(\Tth + \Lerrt; h)}{\Lth - \th} \, \quad \text{for any feasible } \th.
\end{align*}
This inequality comes from the first order stationary condition (see \citet{LW15} for details) in terms of $\th$ fixing $\w$ at $\Lw$. Here, if we take $\th = \Tth$ above, we have
\begin{align*}
	& \biginner{\nabla \L\big(\Tth + \Lerrt \big)}{\Lerrt} \leq - \biginner{\partial \lambda \R(\Tth + \Lerrt;h)}{\Lerrt} \overset{(i)}{\leq}  \lambda( \|\Tth_{H^c}\|_1 - \|\Lth_{H^c}\|_1 )
\end{align*}
where $S$ is true support set of $\Tth$ and the inequality $(i)$ holds due to the convexity of $\ell_1$ norm. 

\paragraph{i) $h < k$: } By Theorem \ref{ThmSupp}, we can guarantee with high probability that $H \subset S$. Then, by triangular inequality (in inequality $(ii)$ below) and the fact that $\Tth$ is $S$-sparse vector, we have   
\begin{align}\label{EqnLocalMinima}
	& \biginner{\nabla \L\big(\Tth + \Lerrt \big)}{\Lerrt} \leq \,  \lambda( \|\Tth_{H^c}\|_1 + \|\Lerrt_{S^c}\|_1  - \|\Lerrt_{S^c}\|_1  - \|\Lth_{H^c}\|_1 ) = \lambda( \|\Tth_{H^c} + \Lerrt_{S^c}\|_1  - \|\Lerrt_{S^c}\|_1  - \|\Lth_{H^c}\|_1 )  \nonumber\\
	\overset{(ii)}{\leq} \, & \lambda\big( \|\Tth_{H^c} + \Lerrt_{S^c} + \Lerrt_{S-H}\|_1 + \|\Lerrt_{S-H}\|_1 - \|\Lerrt_{S^c}\|_1  - \|\Lth_{H^c}\|_1 \big) = \lambda(\|\Lerrt_{S-H}\|_1 - \|\Lerrt_{S^c}\|_1 ) \, .
\end{align}
Combining \eqref{EqnLocalMinima} and \ref{Con:rsc} yields 
\begin{align*}
	& \RSCcon \|\Lerrt\|_2^2  - \RSCtolOne \frac{\log p}{n}\|\Lerrt\|_1^2 \leq \biginner{\nabla \L\big(\Tth + \Lerrt\big) - \nabla\L\big(\Tth\big) }{\Lerrt}
	\leq  - \biginner{\nabla \L \big(\Tth\big) }{\Lerrt} + \lambda\, \big(\|\Lerrt_{S-H}\|_1 - \|\Lerrt_{S^c}\|_1 \big).
\end{align*}
If we assume $\max \big\{ \| \nabla \L\big(\Tth\big) \|_\infty, 2\rho\RSCtolOne\frac{\log p}{n} \big\} \leq \frac{\lambda}{4}$ (which are slightly different to assumptions in the statement, however they are purely for simplicity and can be relaxed if we use the convexity of $\L$, as we mentioned in the beginning of the proof), we can conclude that 
\begin{align}\label{EqnMainTmp2}
	0 \leq \, & \RSCcon \|\Lerrt\|_2^2  \leq  \big\| \nabla \L \big(\Tth\big) \big\|_\infty \|\Lerrt\|_1 + \lambda\, \big(\|\Lerrt_{S-H}\|_1 - \|\Lerrt_{S^c}\|_1 \big) + 2 \rho \RSCtolOne \frac{\log p}{n} \|\Lerrt\|_1 \nonumber\\
	\leq \, & \frac{\lambda}{2}\|\Lerrt\|_1 - \lambda \|\Lerrt_{S^c}\|_1  + \lambda \|\Lerrt_{S-H}\|_1 \leq \frac{\lambda}{2}\|\Lerrt_{S}\|_1 - \frac{\lambda}{2}\|\Lerrt_{S^c}\|_1  + \lambda \|\Lerrt_{S-H}\|_2 \leq \frac{\lambda}{2}\|\Lerrt_{S}\|_1  + \lambda \|\Lerrt_{S-H}\|_2 .
\end{align}
As a result, we can finally have an $\ell_2$ error bound as follows:
\begin{align*}
	& \RSCcon \|\Lerrt\|_2^2  \leq \frac{\lambda\sqrt{k}}{2}\|\Lerrt_{S}\|_2  + \lambda\sqrt{k-h} \|\Lerrt_{S-H}\|_2 \leq \left( \frac{\lambda\sqrt{k}}{2}  + \lambda\sqrt{k-h}\right) \|\Lerrt \|_2 
\end{align*}
implying that 
\begin{align*}
	\|\Lerrt\|_2 \leq \frac{1}{\RSCcon}\left( \frac{\lambda\sqrt{k}}{2}  + \lambda\sqrt{k-h}\right)  \, .
\end{align*}

\paragraph{ii) $h \geq k$: } As in the previous case, Theorem \ref{ThmSupp} can guarantee $S \subseteq H$ where equality holds if $h=k$. Instead of \eqref{EqnLocalMinima}, now we have
\begin{align}\label{EqnLocalMinimaCase2}
	& \biginner{\nabla \L\big(\Tth + \Lerrt \big)}{\Lerrt} \leq \,  \lambda( \|\Tth_{H^c}\|_1 + \|\Lerrt_{H^c}\|_1  - \|\Lerrt_{H^c}\|_1  - \|\Lth_{H^c}\|_1 ) \nonumber\\
	= \ & \lambda( \|\Tth_{H^c} + \Lerrt_{H^c}\|_1  - \|\Lerrt_{H^c}\|_1  - \|\Lth_{H^c}\|_1 ) = - \|\Lerrt_{H^c}\|_1.
\end{align}
By similar reasoning in the case of i), we combine \eqref{EqnLocalMinimaCase2} and \ref{Con:rsc} to obtain
\begin{align}\label{EqnMainTmp2}
	0 \leq \, & \RSCcon \|\Lerrt\|_2^2  \leq \frac{\lambda}{2}\|\Lerrt\|_1 - \lambda \|\Lerrt_{H^c}\|_1   \leq \frac{\lambda}{2}\|\Lerrt_{H}\|_1 - \frac{\lambda}{2}\|\Lerrt_{H^c}\|_1  \leq \frac{\lambda}{2}\|\Lerrt_{H}\|_1 \leq \frac{\lambda\sqrt{h}}{2}\|\Lerrt\|_2 
\end{align}
implying that
\begin{align*}
	\|\Lerrt\|_2 \leq \frac{1}{\RSCcon} \frac{\lambda\sqrt{h}}{2} \, .
\end{align*}

\subsection{Proof of Corollary \ref{CorLS}}
The proof our corollary is similar to that of Corollary 1 of \cite{LW17}, who derive the result for $(\mu,\gamma)$-amenable regularizers. Here we only describe the parts that need to be modified from \cite{LW17}.

In order to utilize theorems in the main paper, we need to establish the RSC condition \ref{Con:rsc} and the strict dual feasibility:
				$\|\Gz_{\USupNonregC}\|_\infty \leq 1 - \delta$

 First, the RSC is known to hold w.h.p as shown in several previous works such as Lemma \ref{LemLSRSC}.

 \begin{lemma}[Corollary 1 of \cite{LW15}]\label{LemLSRSC}
 	The RSC condition in \ref{Con:rsc} for linear models holds with high probability with $\RSCcon=\frac{1}{2}\lambda_{\min}(\Sigma_{x})$ and $\RSCtolOne \asymp 1$, under sub-Gaussian assumptions in the statement. 
 \end{lemma}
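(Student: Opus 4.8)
The plan is to reduce the RSC inequality \ref{Con:rsc} to a one-sided concentration bound on the sample Gram matrix and then invoke the standard deviation inequality for sub-Gaussian designs. First I would note that for the least-squares loss the gradient is affine in $\th$, so the curvature term collapses to a quadratic form: writing $\GG = \frac{1}{n}\X^\top\X$,
\[
	\biginner{\nabla\L(\Tth+\errt) - \nabla\L(\Tth)}{\errt} \;=\; \frac{2}{n}\|\X\errt\|_2^2 \;=\; 2\,\errt^\top\GG\,\errt .
\]
Hence establishing \ref{Con:rsc} is equivalent to lower bounding the random quadratic form $\errt^\top\GG\,\errt$ uniformly over error directions $\errt\in\reals^p$; the restriction $\errt\in\errtSet$ is immaterial, since the bound I invoke holds for every vector.

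Second, the heart of the argument is the deviation bound of \citet{Raskutti2010}, sharpened in \citet{LW15}: for a matrix $\X$ whose rows are i.i.d. zero-mean sub-Gaussian with covariance $\Sigma_x$, there exist universal constants $c_1,c_2,c_3$ such that, with probability at least $1-c_1\exp(-c_2 n)$,
\[
	\frac{\|\X v\|_2^2}{n} \;\geq\; \frac{1}{2}\,\|\Sigma_x^{1/2}v\|_2^2 \;-\; c_3\,\frac{\log p}{n}\,\|v\|_1^2 \qquad \text{for all } v\in\reals^p .
\]
Combining this with the elementary inequality $\|\Sigma_x^{1/2}v\|_2^2 \geq \lambda_{\min}(\Sigma_x)\|v\|_2^2$ and substituting $v=\errt$ into the quadratic form above yields \ref{Con:rsc} with curvature $\RSCcon$ equal to a fixed fraction of $\lambda_{\min}(\Sigma_x)$ — precisely $\tfrac12\lambda_{\min}(\Sigma_x)$ under the loss normalization of \citet{LW15} — and tolerance $\RSCtolOne$ of order one, depending only on the sub-Gaussian parameters, exactly as claimed.

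Third, and this is where I expect the real work to lie if the deviation bound is not taken as a black box, one must prove that lower bound itself. A pointwise concentration estimate for a fixed $v$ followed by an $\epsilon$-net union bound over the sphere is hopelessly lossy when $p\gg n$: the Gram matrix is rank-deficient, so $\lambda_{\min}(\GG)=0$ and no uniform positive lower bound on $\errt^\top\GG\errt/\|\errt\|_2^2$ can hold. The standard remedy is a \emph{peeling} argument over the ratio $\|v\|_1/\|v\|_2$: one controls the quadratic form on each fixed-radius cone $\{v:\|v\|_1\le t\|v\|_2,\ \|v\|_2=1\}$ via a Gaussian comparison (Gordon / Sudakov--Fernique) bound on the localized Gaussian width, and then applies a union bound over a geometric grid of radii $t$. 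The extra slack term $\frac{\log p}{n}\|v\|_1^2$ is precisely the price of this localization, and the sub-Gaussian tails are what convert the estimate into the stated probability $1-c_1\exp(-c_2 n)$.
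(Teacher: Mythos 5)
Your proposal is correct in substance, but note that the paper itself offers no proof of this lemma at all: it is imported verbatim as Corollary~1 of \citet{LW15}, and your second step --- invoking the uniform deviation bound $\frac{\|\X v\|_2^2}{n} \geq \frac{1}{2}\|\Sigma_x^{1/2}v\|_2^2 - c_3\frac{\log p}{n}\|v\|_1^2$ for sub-Gaussian designs --- is exactly the content of that citation. So relative to the paper you are doing strictly more work, and the reduction in your first step (the least-squares gradient is affine, hence the curvature term is the quadratic form $\errt^\top\GG\errt$, and the restriction to $\errtSet$ is vacuous because the deviation bound is uniform over $\reals^p$) is the right and standard one. Two caveats. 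First, the constant: with the paper's own normalization $\L(\th)=\frac{1}{n}\|\X\th-\y\|_2^2$ (no factor $\frac{1}{2}$), the inner product equals $2\errt^\top\GG\errt$ and the curvature constant comes out as $\lambda_{\min}(\Sigma_x)$ rather than $\frac{1}{2}\lambda_{\min}(\Sigma_x)$; you correctly flag that the stated value $\RSCcon=\frac{1}{2}\lambda_{\min}(\Sigma_x)$ presumes the $\frac{1}{2n}$ normalization of \citet{LW15}, and this is really an inconsistency the paper inherits by citing the constant verbatim. Second, in your third step the Gordon/Sudakov--Fernique comparison applies to Gaussian designs (\citet{Raskutti2010}); the sub-Gaussian extension in \citet{LW15} replaces it with a discretization plus sub-exponential concentration before the peeling over the $\|v\|_1/\|v\|_2$ cones, so the proof machinery differs even though the peeling structure and the resulting $\frac{\log p}{n}\|v\|_1^2$ tolerance term are as you describe. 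Neither caveat affects the validity of the lemma as used downstream.
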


In order to show the remaining strict dual feasibility condition of our PDW construction, we consider \eqref{EqnThm1Q} (by the zero-subgradient and the definition of $\Q$) in the block form:
\begin{align}
	\begin{bmatrix}
    	\Q_{\Nonreg\Nonreg} & \Q_{\Nonreg\DSupNonreg} & \Q_{\Nonreg\USupNonregC}\\
    	\Q_{\DSupNonreg\Nonreg} & \Q_{\DSupNonreg\DSupNonreg} & \Q_{\DSupNonreg\USupNonregC}\\
    	\Q_{\USupNonregC\Nonreg} & \Q_{\USupNonregC\DSupNonreg} & \Q_{\USupNonregC\USupNonregC}\\
    \end{bmatrix}	
    \begin{bmatrix}
    	\Gth_{\Nonreg} - \Tth_{\Nonreg}  \\
    	\Gth_{\DSupNonreg} - \Tth_{\DSupNonreg} \\
    	{\bm 0}
    \end{bmatrix}	
    + 
    \begin{bmatrix}
    	\nabla\L(\Tth)_{\Nonreg} \\
    	\nabla\L(\Tth)_{\DSupNonreg}\\
    	\nabla\L(\Tth)_{\USupNonregC}
    \end{bmatrix}	
    + 
    \lam
    \begin{bmatrix}
    	{\bm 0} \\
    	\Gz_{\DSupNonreg} \\
    	\Gz_{\USupNonregC} \\
    \end{bmatrix}	
    =
    {\bm 0}.
\end{align}

By simple manipulation, we can obtain
\begin{align}\label{EqnCor1}
	\Gz_{\USupNonregC} = \frac{1}{\lam} \left\{ - \nabla\L(\Tth)_{\USupNonregC} + \Q_{\USupNonregC\USupNonreg} \Big(\Q_{\USupNonreg\USupNonreg}\Big)^{-1} \Big( - \nabla\L(\Tth)_{\USupNonreg} - \lam \Gz_{\DSupNonreg} \Big)  \right\}.
\end{align}
Here note that our construction of PDW can guarantee the $\ell_\infty$ bound in \eqref{EqnThmSup2}. In case of \eqref{EqnLS}, since we have $\nabla \L(\th) = \GG\th -\widehat{\gamma}$ and $\nabla^2\L(\th) = \GG$ where $(\GG,\widehat{\gamma}) = \left(\frac{\X^\top X}{n}, \frac{\X^\top \y}{n}\right)$, we need to show below that 
	\begin{align}
		\Gz_{\USupNonregC} & \ \leq \frac{1}{\lam} \left\{ - \GG_{\USupNonregC\USupNonreg}\Tth_{\USupNonreg} + \widehat{\gamma}_{\USupNonregC} + \GG_{\USupNonregC\USupNonreg}\Tth_{\USupNonreg}  - \GG_{\USupNonregC\USupNonreg} \Big(\GG_{\USupNonreg\USupNonreg}\Big)^{-1} \widehat{\gamma}_{\USupNonreg} \right\} + \matnormBig{\GG_{\USupNonregC\USupNonreg} \Big(\GG_{\USupNonreg\USupNonreg}\Big)^{-1}}{\infty} \nonumber\\
		& \ \leq \frac{1}{\lam} \left\{ \widehat{\gamma}_{\USupNonregC}  - \GG_{\USupNonregC\USupNonreg} \Big(\GG_{\USupNonreg\USupNonreg}\Big)^{-1} \widehat{\gamma}_{\USupNonreg} \right\} + \eta
	\end{align}
 	for the strict dual feasibility from \eqref{EqnCor1}.
 	As derived in \cite{LW17}, we can write 
	\begin{align}
		\left\| \widehat{\gamma}_{\USupNonregC}  - \GG_{\USupNonregC\USupNonreg} \Big(\GG_{\USupNonreg\USupNonreg}\Big)^{-1} \widehat{\gamma}_{\USupNonreg} \right\|_\infty	= \left\|\frac{\X^\top_{\USupNonregC}\Pi\e}{n}\right\|_\infty
	\end{align}
	where $\Pi$ is an orthogonal project matrix on $\X_{\USupNonreg}$: $I - \X_{\USupNonreg}(\X_{\USupNonreg}^\top \X_{\USupNonreg})^{-1}\X_{\USupNonreg}^\top$.

	For any $j$, we define $u_j$ such that $e_j^\top \frac{\X^\top_{\USupNonregC}\Pi\e}{n} := u_j^\top \e$. Then we have
	\begin{align}
		\|u_j\|_2^2 = \left\|\frac{\Pi\X_{\USupNonregC} e_j}{n} \right\|_2^2 \leq \left\|\frac{\X_{\USupNonregC} e_j}{n} \right\|_2^2 \leq \frac{\cLSsix}{n}. 	
	\end{align}
	Hence by the sub-Gaussian tail bounds followed by a union bound, we can conclude that
	\begin{align}
		\left\| \widehat{\gamma}_{\USupNonregC}  - \GG_{\USupNonregC\USupNonreg} \Big(\GG_{\USupNonreg\USupNonreg}\Big)^{-1} \widehat{\gamma}_{\USupNonreg} \right\|_\infty \leq C \sqrt{\frac{\log p}{n}} 
	\end{align}
	with probability at least $1-c \exp (-c' \log p)$ for \emph{all} selections of $\Nonreg$.
	We can establish have strict dual feasibility for any selection of $\Nonreg$ w.h.p, provided $\lam > \frac{C}{1-\eta}\sqrt{\frac{\log p}{n}}$, and now turn to $\ell_\infty$ bounds. From \eqref{EqnThmInfty}, we have
	\begin{align}
		\left\| \GG_{\USupNonreg\USupNonreg} \Big(\GG_{\USupNonreg\USupNonreg}\Tth _{\USupNonreg} -\widehat{\gamma}_{\USupNonreg} \Big) \right\|_\infty = \left\|\left(\frac{\X^\top_{\USupNonreg}\X_{\USupNonreg} }{n}\right)^{-1}\left(\frac{\X^\top_{\USupNonreg}\e}{n}\right) \right\|_\infty	.
	\end{align}
	Then for $j \in \USupNonreg$, we define $v$ such that $e_j^\top 
 \left(\frac{\X^\top_{\USupNonreg}\X_{\USupNonreg} }{n}\right)^{-1}\left(\frac{\X^\top_{\USupNonreg}\e}{n}\right)
 := v_j^\top \e$. Since for any selection of $\Nonreg$, $\|v_j\|_2^2$ is bounded as follows:
 \begin{align}
 	\|v_j\|_2^2 = \frac{1}{n^2} \left\|\X_{\USupNonreg} \left(\frac{\X^\top_{\USupNonreg}\X_{\USupNonreg} }{n}\right)^{-1} e_j \right\|_2^2 =\frac{1}{n} \left|e_j^\top \left(\frac{\X^\top_{\USupNonreg}\X_{\USupNonreg} }{n}\right)^{-1} e_j \right|_2^2 \leq \frac{\cLSsix}{n}.
 \end{align}
 Similarly by the sub-Gaussian tail bound and a union bound over $j$, we can obtain 
\begin{align}
	\left\| \GG_{\USupNonreg\USupNonreg} \Big(\GG_{\USupNonreg\USupNonreg}\Tth _{\USupNonreg} -\widehat{\gamma}_{\USupNonreg} \Big) \right\|_\infty \leq C \sqrt{\frac{\log p}{n}} 	
\end{align}
with probability at least $1-c \exp (-c' \log p)$.

\subsection{Proof of Corollary \ref{CorGlasso}}

As in the proof of Corollary \ref{CorLS}, the proof procedure is quite similar to that of Corollary 4 of \cite{LW17}. Deriving upper bounds on $\Supp$ in \cite{LW17} can be seamlessly extendable to upper bounds on $\USupNonreg$ for any selection of $\Nonreg \subseteq \{1,2,\hdots,p\} \times \{1,2,\hdots,p\} \text{ s.t. } |\Nonreg| = h$. mainly because the required upper bounds are related to entry-wise maximum on the true support $\Supp$ but entry-wise maximum in this case is uniformly upper bounded for all entries. 

Specifically, it computes the upper bound of $\|\textrm{vec}(\widehat{\Sigma}_{\Supp} - \Sigma^*_{\Supp})\|_\infty$ from the fact that $\|\textrm{vec}(\widehat{\Sigma} - \Sigma^*)\|_\infty \leq c\sqrt{\frac{\log p}{n}}$. This actually holds for any selection of $\Nonreg$. Similarly, it computes the upper bound of $\max_{(j,k)\in \Supp} |e^\top_j (\Sigma^*\Delta)^{\ell}\Sigma^*e_k|$ by H\"older's inequality and the definition of matrix induced norms: $|e^\top_j (\Sigma^*\Delta)^{\ell}\Sigma^*e_k| \leq \|e^\top_j (\Sigma^*\Delta)^{\ell-1} \|_1 \|\Delta\Sigma^*e_k\|_\infty \leq \matnormbig{(\Sigma^*\Delta)^{\ell-1}}{\infty} \|\Delta\|_{\max} \|\Sigma^*e_k \|_1 \leq \matnorm{\Sigma^*}{\infty}^{\ell+1} \matnorm{\Delta}{1}^{\ell-1} \|\Delta\|_{\max} $, which clearly holds for any index $(j,k)$ beyond $\Supp$. Finally, $\matnorm{\TQ - \nabla^2 \L(\TTh)_{\Supp\Supp}}{\infty}$ is shown to be upper bounded by the fact that $\matnorm{\TQ - \nabla^2 \L(\TTh)_{\Supp\Supp}}{\infty} \precsim d\sqrt{\frac{\log p}{n}}$.

 The remaining proof of this result directly follows similar lines to the proof of Corollary 4 in \cite{LW17}. 
 
\subsection{Proof of Theorem~\ref{th:con}}

\begin{proof}
From Algorithm~\ref{alg:bcd}, we obtain the relation
\begin{align*}
\frac{1}{\tau}(\bm w^k - \bm w^{k+1}) + r(\bm \theta^{k+1}) - r(\bm \theta^k) &\in \bm r(\bm \theta^{k+1}) + \partial \delta(\bm w^{k+1}|\cS)\\
\frac{1}{\eta}(\bm \theta^k - \bm \theta^{k+1}) + \nabla \L(\bm \theta^{k+1}) - \nabla \L(\bm \theta^k) &\in \nabla \L(\bm \theta^{k+1}) + \lambda\sum_{i=1}^p w_i^{k+1} \partial r_i(\bm \theta^{k+1})
\end{align*}
from the proximal gradient steps.

At $k$-th iteration, we have
\begin{align*}
&\L(\bm \theta^{k+1}) + \lambda\inner{\bm w^{k+1}}{\bm r(\bm \theta^{k+1})}\\
\le & \L(\bm \theta^{k}) + \inner{\nabla \L(\bm \theta^{k})}{\bm \theta^{k+1} - \bm\theta^k} + \frac{L}{2}\|\bm \theta^{k+1} - \bm \theta^k\|^2 + \lambda\inner{\bm w^{k+1}}{\bm r(\bm \theta^{k+1})}\\
\le & \L(\bm \theta^{k}) + \inner{\nabla \L(\bm \theta^k) + \lambda \sum_{i=1}^p w_i^{k+1}\partial r_i(\bm \theta^{k+1})}{\bm \theta^{k+1} - \bm\theta^k} + \frac{L}{2}\|\bm \theta^{k+1} - \bm\theta^k\|^2 + \lambda\inner{\bm w^{k+1}}{\bm r(\bm \theta^k)}\\
= &\L(\bm \theta^{k}) + \lambda\inner{\bm w^{k}}{\bm r(\bm \theta^k)} - (\frac{1}{\eta} - \frac{L}{2})\|\bm \theta^{k+1} - \bm\theta^k\|^2 + \lambda\inner{\bm w^{k+1} - \bm w^k}{\bm r(\bm \theta^k)}\\
\le & \L(\bm \theta^{k}) + \lambda\inner{\bm w^{k}}{\bm r(\bm \theta^k)} - (\frac{1}{\eta} - \frac{L}{2})\|\bm \theta^{k+1} - \bm\theta^k\|^2 + \lambda\inner{\bm w^{k+1} - \bm w^k}{\frac{1}{\tau}(\bm w^{k} - \bm w^{k+1}) - \partial\delta(\bm w^{k+1})}\\
\le & f(\bm \theta^{k}) + \lambda\inner{\bm w^{k}}{\bm r(\bm \theta^k)} - (\frac{1}{\eta} - \frac{L}{2})\|\bm \theta^{k+1} - \bm\theta^k\|^2 - \frac{\lambda}{\tau}\|\bm w^{k+1} - \bm w^k\|^2
\end{align*}
If we choose $\eta = 1/L_f$, we have,
\[
\frac{L_f}{2}\|\bm \theta^{k+1} - \bm \theta^k\|^2 + \frac{\lambda}{\tau}\|\bm w^{k+1}- \bm w^k\|^2 \le F(\bm \theta^k) - F(\bm \theta^{k+1})
\]
By telescoping both sides we get,
\[
\frac{1}{K}\sum_{k=1}^K (\frac{L_f}{2}\|\bm \theta^{k+1} - \bm \theta^k\|^2 + \frac{\lambda}{\tau}\|\bm w^{k+1}- \bm w^k\|^2) \le \frac{1}{K}(F(\bm \theta^K) - F^*).
\]
Moreover we know that,
\[
T(\bm \theta^{k+1}, \bm w^{k+1}) \le (4 + 2\lambda L_r/L_f) \mathcal{G}_k.
\]
\end{proof}

\section{Simulations for Gaussian Graphical Models.}

We now illustrate the usefulness trimmed regularization for sparse Gaussian Graphical Model estimation. We consider the ``diamond'' graph example described in~\cite{RWRY11} (section 3.1.1).
This graph $G = (V,E)$ has vertex set $V = \{1, 2, 3, 4\}$, with all edges except $(1, 4)$.  We consider a family of true covariance matrices  with diagonal entries $\Sigma^*_{ii} = 1$ for all $i \in V$; off-diagonal elements $\Sigma^*_{ij} = \rho$ for all edges $(i,j) \in  E \setminus \{(2, 3)\}$; $\Sigma^*_{23} = 0$; and finally the entry corresponding to the non-edge $(1, 4)$ is set as $\Sigma^*_{14 }= 2\rho^2.$  We analyze the performance of Graphical Trimmed Lasso under two settings: $\rho \in \{0,1,0.3\}.$ As discussed in~\cite{RWRY11}, if $\rho=0.1$ the incoherence condition is satisfied ; if $\rho=0.3$ it is violated. 
Under both settings, we report the probability of successful support recovery based on 100 replicate experiments for $n=100$ and $\frac{p^2-h}{p^2} \in \{0.4,0.5, ...,1\}$ and compare it with Graphical Lasso, Graphical SCAD and Graphical MCP (The MCP and SCAD parameters were set to 2.5 and 3.0 as varying these did not affect the results significantly).  For each method and replicate experiment, success is declared if the true support is recovered for at least one value of $\lambda_n$ along the solution path. We can see that for a wide range of values for the trimming parameter, Graphical Trimmed Lasso outperforms SCAD and MCP alternatives regardless of whether the incoherence condition holds or not. In addition its probability of success is always superior to that of vanilla Graphical Lasso, which fails to recover the true support when the incoherence condition is violated.  

\begin{figure}[t!]
		\centering
		 \includegraphics[width=0.4\linewidth]{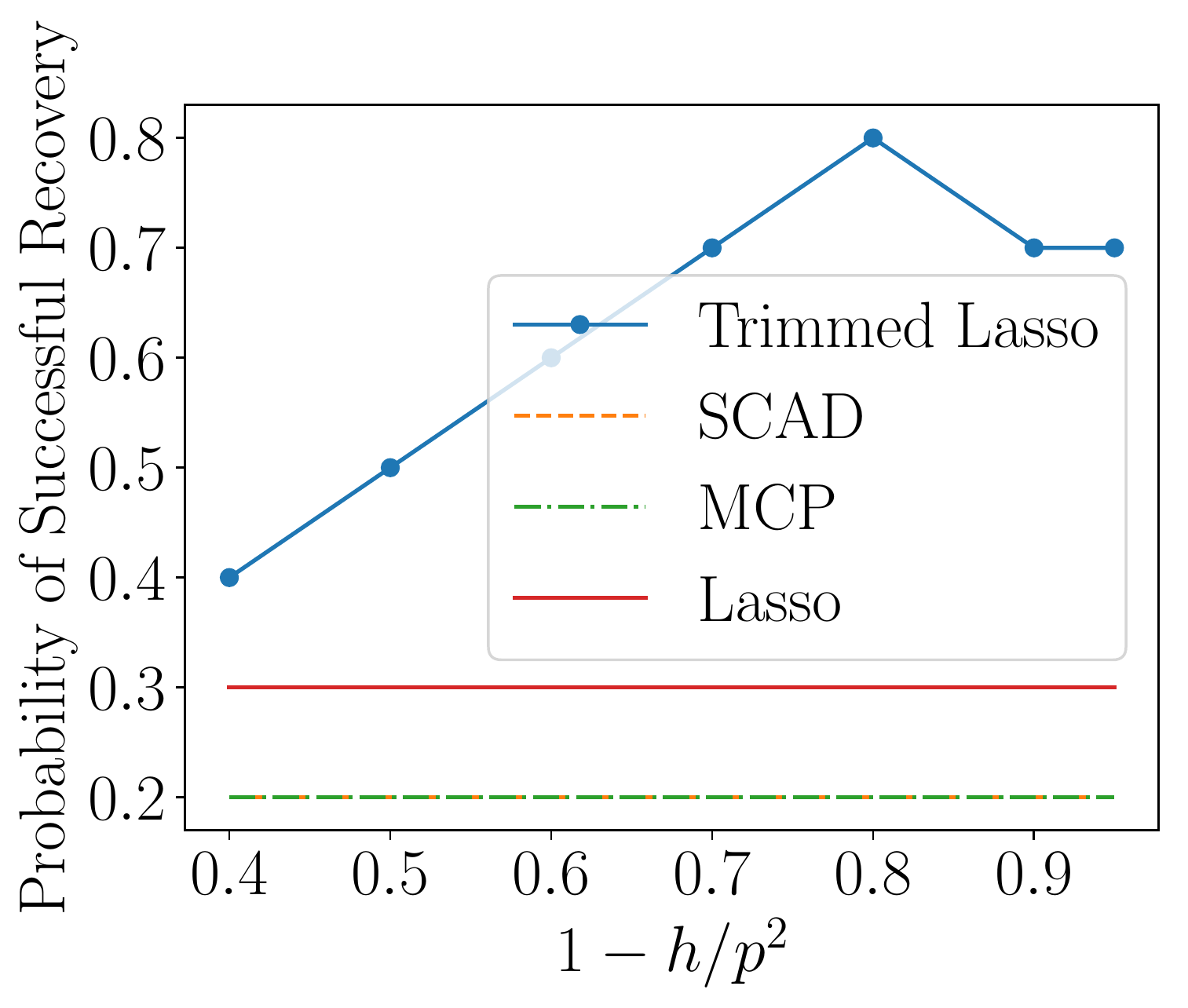} ~ \includegraphics[width=0.4\linewidth]{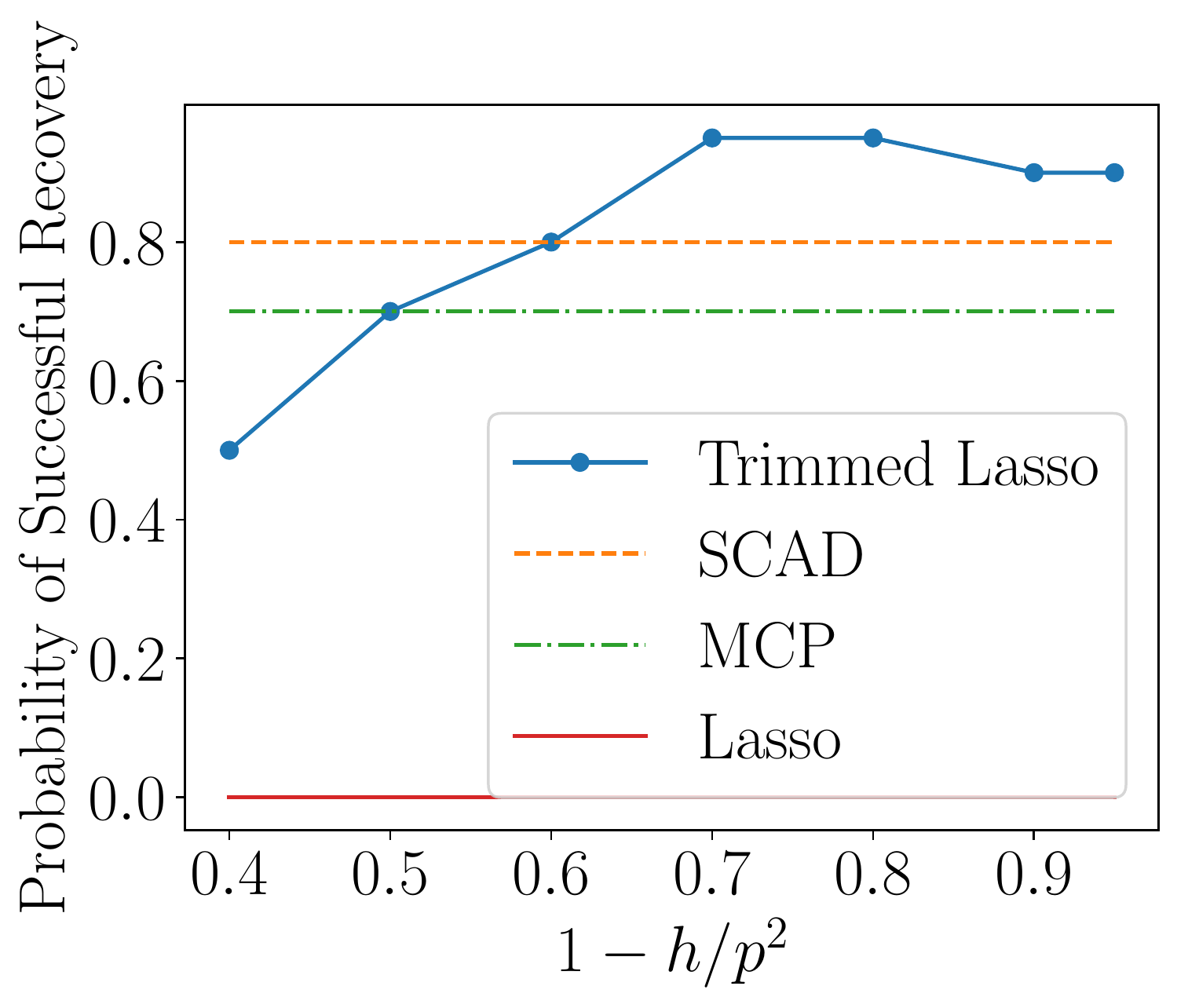} 
		\caption{Probability of successful support recovery for Graphical Trimmed Lasso as $h$ vary, Graphical SCAD, Graphical MCP and Graphical Lasso. Left: incoherence condition holds. Right: incoherence condition is violated.}
		\label{Fig:nonincoherence}
	\end{figure}

\end{document}